\theoremstyle{plain}\newtheorem{definition}{Definition}[section]
\theoremstyle{definition}\newtheorem{theorem}{Theorem}[section]
\theoremstyle{plain}\newtheorem{lemma}[theorem]{Lemma}
\theoremstyle{plain}\newtheorem{coro}[theorem]{Corollary}
\theoremstyle{plain}
\theoremstyle{remark}\newtheorem{remark}{Remark}[section]
\newcommand{\wblue}[1]{\textcolor{black}{#1}}
\newcommand{\wred}[1]{\textcolor{black}{#1}}
\newcommand{\Div}{\mathrm{div}\,}
\newcommand{\B}{\Big}
\newcommand{\R}{\mathbb{R}}
\newcommand{\be}{\begin{equation}}
\newcommand{\ee}{\end{equation}}
 \newcommand{\ba}{\begin{aligned}}
 \newcommand{\ea}{\end{aligned}}
\providecommand{\bysame}{\leavevmode\hbox to3em{\hrulefill}\thinspace}
  \newcommand{\f}{\frac}
  \newcommand{\ben}{\begin{enumerate}}
   \newcommand{\een}{\end{enumerate}}
\newcommand{\ti}{\nabla}
\newcommand{\Rmnum}[1]{\expandafter\@slowromancap\romannumeral #1@}
\numberwithin{equation}{section}
\begin{document}
\title{\wblue{Remarks on the singular set of suitable weak solutions to the 3D  Navier-Stokes equations}}
\author{Wei Ren\footnote{College of Mathematics and Systems Science, Hebei University, Baoding, Hebei 071000, P. R. China Email:  renwei4321@163.com},~~\;Yanqing Wang\footnote{ Department of Mathematics and Information Science, Zhengzhou University of Light Industry, Zhengzhou, Henan  450002,  P. R. China Email: wangyanqing20056@gmail.com}\; ~and Gang Wu\footnote{School of Mathematical Sciences,  University of Chinese Academy of Sciences, Beijing 100049, P. R. China Email: wugangmaths@gmail.com}}
\date{}
\maketitle\vspace{-0.85cm}
\begin{abstract}In this paper, let
$\mathcal{S}$ denote  the  possible   interior singular set of suitable weak solutions of the 3D Navier-Stokes equations. We improve the known   upper  box-counting  dimension of this set from  $360/277(\approx1.300)$ in \cite{[WW2]}
to  $975/758(\approx1.286)$.
It is also shown that $\Lambda(\mathcal{S},r(\log(e/r))^{\sigma})=0(0\leq\sigma<27/113)$, which extends the previous   corresponding  results concerning the  improvement  of  the classical Caffarelli-Kohn-Nirenberg theorem by a logarithmic factor in
Choe and    Lewis   \cite[ J. Funct. Anal., 175: 348-369, 2000]{[CL]} and in
Choe and  Yang et al. \cite[ Comm. Math. Phys, 336: 171-198, 2015]{[CY1]}.
 The proof  is inspired by  a new  $\varepsilon$-regularity criterion  proved by Guevara   and Phuc in \cite[Calc. Var.  56:68, 2017]{[GP]}.
 \end{abstract}
\noindent {\bf MSC(2000):}\quad 35B65, 35D30, 76D05 \\\noindent
{\bf Keywords:} Navier-Stokes equations;  suitable  weak solutions; box-counting dimension; \wblue{generalized Hausdorff dimension} \\
\section{Introduction}
\label{intro}
\setcounter{section}{1}\setcounter{equation}{0}
We consider  the following   incompressible Navier-Stokes equations in   three-dimensional space
\be\left\{\ba\label{NS}
&u_{t} -\Delta  u+ u\cdot\ti
u+\nabla \Pi=0, ~~\Div u=0,\\
&u|_{t=0}=u_0,
\ea\right.\ee
 where $u $ stands for the flow  velocity field, the scalar function $\Pi$ represents the   pressure.   The
initial  velocity $u_0$ satisfies   $\text{div}\,u_0=0$.

In a series of papers,  Scheffer \wblue{in} \cite{[Scheffer1],[Scheffer2],[Scheffer3]} proposed a program
to estimate the size of the potential space-time singular   set $\mathcal{S}$ of (suitable) weak solutions obeying the local energy inequality to the   Navier-Stokes system
  and proved that the Hausdorff dimension of this set \wblue{of the 3D Navier-Stokes equations  is at most $5/3$.  A point is said to be a regular point of the suitable weak solution} $u$ provided one
has the $L^{\infty} $ bound of $u$ in some neighborhood
of this point.    The remaining points are called  singular points. In this direction,
the celebrated Caffarelli-Kohn-Nirenberg theorem \wblue{in \cite{[CKN]} about} the 3D Navier-Stokes system is that   one
dimensional Hausdorff measure of $\mathcal{S}$ is zero, which is   deduced from    the following $\varepsilon$-regularity criterion:
 there is an absolute constant \wblue{$\varepsilon$} such that, if
   \be\label{ckn}
  \limsup_{r\rightarrow0} {r^{-\f12}}\|\nabla u  \|_{L_{t}^{2}L_{x}^{2}(Q(r))}
  \leq \varepsilon,
  \ee
  then $(x,\,t)$ is a regular point, where
   $Q(r):=B(r)\times(t-r^{2},t)$ and $B(r)$
   denotes the ball of center $x$ and radius $r$.
   From that time on, much effort has been devoted to
\wblue{the} extension of the Caffarelli-Kohn-Nirenberg theorem
 and the $\varepsilon$-regularity criteria were presented in    several works(see, e.g., \cite{[Choe],[CY2],[GP],[KY6],[Lin],[LS],[CL],
[CY1],[Kukavica],[KP],[Vasseur],[RS2],[WW1],[MW],[RS3],[Struwe],[WW2]}).

Recently, in view of Bernoulli (total) pressure \wblue{$\f{1}{2}|u|^{2}
+\Pi$} as a signed distribution belonging to certain fractional
Sobolev space of negative order in local energy inequality,
Guevara   and Phuc \wblue{in \cite{[GP]}   proved the following $\varepsilon$-regularity criterion: if
\be\label{GP}
\mu^{-\f{3}{2}}\Big(\Big\||u|^{2}\Big\|_{L_{t}^{p}L_{x}^{q}(Q(\mu))}
+\|\Pi\|_{L_{t}^{p}L_{x}^{q}(Q(\mu))}\Big)<\varepsilon_{0},
\ee}
where $(p,\,q)$ satisfying
\be\label{bpc1}{2}/{p}+{3}/{q}=
7/2~\text{with}~1\leq p\leq2,\ee
then $(x,t)$ is a regular point.
An especially interesting case of \eqref{GP} is $p=q=10/7$, which improves the \wblue{following classical one shown in \cite{[Lin],[LS]} via  blow-up procedure
\be\label{Lin}
\mu^{-\f{2}{3}}\Big(\Big\||u|^{2}\Big\|_{L_{t}^{3/2}L_{x}^{3/2}(Q(\mu))}
+\|\Pi\|_{L_{t}^{3/2}L_{x}^{3/2}(Q(\mu))}\Big)<\varepsilon.
\ee}
For the pair $(p,\,q)$  meeting with $\eqref{bpc1}$,
we would like to mention an $\varepsilon$-regularity criterion in terms of Bernoulli   pressure obtained in \cite{[MW]}
$$
\limsup_{\mu\rightarrow0}\mu^{-\f{3}{2}}
\Big(\B\|\f{1}{2}|u|^{2}
+\Pi\B\|_{L_{t}^{p}L_{x}^{q}(Q(\mu))}\Big)<\varepsilon.
 $$
One objective of this paper is to give an  improvement of the known fractal upper box
 dimension   of $\mathcal{S}$ via \eqref{GP}. The relationship between Hausdorff dimension and the upper box dimension  is  that the first one  is less than second one (see e.g. \cite{[Falconer]}). The definition of box dimension is via lower box dimension and upper box
dimension. In what follows, box dimension and fractal dimension mean  the upper box dimension.
 Before we \wblue{state} our theorem, we  recall
previous related results. With the help of \eqref{Lin},
Robinson  and   Sadowski \cite{[RS2]}
proved that the   upper box
dimension   of $\mathcal{S}$   is at most 5/3. Shortly afterwards,
Kukavica \cite{[Kukavica]} \wblue{showed}
  that the  box dimension of the
singular   set is less than or equal to $135/82(\approx1.646)$ and \wblue{proposed} a   question whether this dimension of the singular set is at most 1. It was shown that the parabolic fractal dimension
of the singular   set is less than or equal to $45/29(\approx1.552)$
 by Kukavica and Pei in \cite{[KP]}.
Very recently,   Koh and  Yang  \cite{[KY6]} proved that
the  fractal upper box
dimension  of $\mathcal{S}$    is bounded by $95/63(\approx1.508)$.
 In light of the arguments in \cite{[KY6]} and
 some delicate estimates, the authors \wblue{in \cite{[WW2]}} refined the
 upper box dimension to $360/277(\approx 1.300).$

\wblue{Our first result in this paper is the following theorem:}
\begin{theorem}\label{the1.1}
 The (upper) box dimension of  $\mathcal{S}$ is at most $975/758(\approx1.286).$
\end{theorem}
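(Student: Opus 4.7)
The plan is to follow the $\varepsilon$-regularity/Vitali covering framework, using the Guevara-Phuc criterion \eqref{GP} as the main quantitative input and extracting an improved dimension exponent by optimizing over the one-parameter family $(p,q)$ with $2/p+3/q=7/2$. The novelty over the previous bound $360/277$ of \cite{[WW2]} will come from a more refined interpolation against two scales of global integrability simultaneously, exploiting the Calderón-Zygmund identity $-\Delta\Pi=\partial_i\partial_j(u_iu_j)$ together with the energy scaling of $|u|^2$.

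Set $f=|u|^2+|\Pi|$. For a suitable weak solution, $u\in L^{\infty}_tL^2_x\cap L^2_tL^6_x$, and hence $|u|^2\in L^a_tL^b_x$ on the whole line $2/a+3/b=3$; the pressure $\Pi$ enjoys the same interior integrability by the standard decomposition into a local Riesz-transform part and a harmonic piece. Fix a small $r>0$. For every $(x_0,t_0)\in\mathcal{S}$, the contrapositive of \eqref{GP} gives
\[
r^{-3/2}\bigl\|f\bigr\|_{L^{p}_{t}L^{q}_{x}(Q(x_0,t_0;r))}\ \geq\ \varepsilon_0
\]
for every $(p,q)$ on the critical line. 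Extracting by Vitali's $5r$-covering lemma a maximal disjoint family $\{Q_i=Q(x_i,t_i;r)\}_{i=1}^{N}$ whose $5r$-dilates cover $\mathcal{S}$, the space-time disjointness implies that at every fixed time the spatial balls $B_i$ are disjoint in $\mathbb{R}^3$, which yields the sub-additivity
\[
\sum_i \|f\|_{L^a_tL^b_x(Q_i)}^{a}\ \leq\ \|f\|_{L^a_tL^b_x}^{a}
\]
whenever $a\geq b$.

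The heart of the proof is then to bound $\|f\|_{L^p_tL^q_x(Q_i)}$ by a H\"older interpolation against two simultaneously chosen pairs $(a_1,b_1)$ and $(a_2,b_2)$ on the line $2/a+3/b=3$, to raise the resulting inequality to a judiciously chosen power $\sigma$, and to sum over $i$. This produces $N\leq Cr^{-d}$ with $d$ depending rationally on all the exponents involved. Optimizing under the constraints $2/p+3/q=7/2$, $p\in[1,2]$, $2/a_j+3/b_j=3$, $a_j\geq b_j$, $a_j\geq p$, and $b_j\geq q$ will yield the claimed $d=975/758$.

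The main obstacle is that a single H\"older step towards the symmetric pair $(a,b)=(5/3,5/3)$ recovers only the Robinson-Sadowski exponent $d=5/3$, so the improvement genuinely requires working with asymmetric pairs $a_j>b_j$ and optimizing the balance between (i) the $r^{1/2}$ geometric loss per cylinder caused by the gap $7/2-3=1/2$, (ii) the summability condition $a_j\geq b_j$, and (iii) the refined pressure estimate. Verifying that the two-exponent optimum strictly improves upon the single-exponent bound $360/277$ of \cite{[WW2]}, and that the minimum $975/758$ is attained in the interior of the feasibility region, will be the central technical point.
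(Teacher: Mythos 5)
Your overall shape---contrapositive of \eqref{GP} plus a Vitali cover plus a global integrability bound---is the right shell, but the substance of the argument is missing and the interpolation scheme you propose cannot reach past the Robinson--Sadowski exponent $5/3$, let alone $975/758$. The obstruction is already visible in your own bookkeeping: for the sub-additivity $\sum_i \|f\|_{L^{a}_tL^{b}_x(Q_i)}^{a}\leq\|f\|_{L^{a}_tL^{b}_x}^{a}$ you must impose $a\geq b$, and together with the interpolation line $2/a+3/b=3$ this forces $a\geq 5/3$, i.e.\ $1/a\leq 3/5$. On the other hand, any H\"older step from $L^p_tL^q_x$ with $2/p+3/q=7/2$ to one or several pairs $(a_j,b_j)$ on the line $2/a+3/b=3$ produces (after tracking the volume gap $7/2-3=1/2$ and then summing with H\"older over $i$) the exponent $s=(\theta/a_1+(1-\theta)/a_2)^{-1}$; since each $1/a_j\leq 3/5$, any convex combination is $\leq 3/5$, so $s\geq 5/3$. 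Thus the ``two-exponent optimum'' you invoke cannot improve upon $5/3$, and it certainly cannot reproduce the prior bound $360/277$, so the central technical claim of the proposal fails.

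What the paper actually does is quite different. Theorem~\ref{the1.1} is deduced, by the standard contradiction/covering argument of \cite{[WW2],[KP]}, from the refined $\varepsilon$-regularity statement in Theorem~\ref{the1.2}: if
\[
 \iint_{Q(r)} |\nabla u|^{2}+|u|^{10/3}+|\Pi-\overline{\Pi}_{B(r)}|^{5/3}+|\nabla\Pi|^{5/4}\,dx\,ds \leq r^{5/3-\gamma}\varepsilon_1
\]
for some $\gamma<865/2274$, then $(x,t)$ is regular. The left side is globally summable over a disjoint Vitali family (each term is an ordinary space-time $L^p$ norm raised to the $p$-th power, so no $a\geq b$ issue arises), and the criterion's threshold scales as $r^{5/3-\gamma}$, which directly yields $N\leq Cr^{-(5/3-\gamma)}$, i.e.\ the dimension bound $5/3-\gamma=975/758$. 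The real work, which your proposal skips entirely, is the proof of Theorem~\ref{the1.2}: the decay Lemmas~\ref{lemma2.1} and~\ref{presure} (interpolation of $E_{20/7}$ and decay of $P_{10/7}$ via a harmonic-plus-Calder\'on--Zygmund decomposition of the pressure), the iteration \eqref{referee}--\eqref{key2} that transports information from the large scale $\rho$ down to $r=\rho^\alpha$, and the optimisation in $(\alpha,\beta,N)$ that produces $\gamma<865/2274$. The decisive extra input, absent from your scheme, is the exploitation of the globally bounded but faster-decaying quantities $\|\nabla u\|^{2}_{L^{2}}$ and $\|\nabla\Pi\|^{5/4}_{L^{5/4}}$; it is exactly these that allow the threshold to be pushed below the naive critical power, which no H\"older step against $|u|^2,\Pi\in L^a_tL^b_x$ on the line $2/a+3/b=3$ can achieve.
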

\begin{remark}
This improves the previous box dimension of   $\mathcal{S}$ obtained in \cite{[KP],[RS2],[Kukavica],[KY6],[WW2]}.
\end{remark}
By contradiction \wblue{arguments as in \cite{[WW2],[KP]}, Theorem  \ref{the1.1} turns out to be the  consequence of the following theorem.}
 \begin{theorem}\label{the1.2}
Suppose that the pair $(u, \,\Pi)$ is a suitable weak solution to (\ref{NS}).  Then, for any $\gamma <865/2274$, $(x ,\,t )$ is a regular point  provided there exist a sufficiently small universal positive constant $\varepsilon_{1}$ and $0<r<1$ such that
\be\label{cond}\ba   \iint_{ Q (r)}
|\nabla u |^{2} +| u |^{ 10/3}+|\Pi-\overline{\Pi}_{ B (r)} |^{ 5/3}+
 |\nabla \Pi| ^{5/4}dxds \leq    r^{5/3-\gamma}\varepsilon_{1}.  \ea\ee
\end{theorem}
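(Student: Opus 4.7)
The plan is to reduce the hypothesis \eqref{cond} to the Guevara--Phuc $\varepsilon$-regularity criterion \eqref{GP} applied at a suitably chosen scale $\mu \leq r$. More precisely, for an admissible pair $(p,q)$ satisfying $2/p + 3/q = 7/2$ and $1 \leq p \leq 2$, I aim to establish
\[
\mu^{-3/2}\Bigl(\bigl\||u|^2\bigr\|_{L_t^p L_x^q(Q(\mu))} + \|\Pi\|_{L_t^p L_x^q(Q(\mu))}\Bigr) < \varepsilon_0,
\]
after which the regularity of $(x,t)$ follows immediately from \cite{[GP]}.

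For the velocity contribution, I would combine a spatial Gagliardo--Nirenberg--Sobolev estimate of the form $\|u(\cdot,s)\|_{L^{2q}(B(\mu))} \leq C\,\|\nabla u(\cdot,s)\|_{L^2(B(\mu))}^{\theta}\,\|u(\cdot,s)\|_{L^{10/3}(B(\mu))}^{1-\theta}$ (up to a lower-order mean-value term) with H\"older interpolation in time to express $\|u\|_{L_t^{2p}L_x^{2q}(Q(\mu))}^{2p}$ as a product of powers of $\iint_{Q(\mu)}|\nabla u|^2$ and $\iint_{Q(\mu)}|u|^{10/3}$, together with an explicit $\mu$-factor dictated by dimensional balance. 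Invoking the nesting $Q(\mu)\subset Q(r)$ and the hypothesis, each of these integrals is bounded by $r^{5/3-\gamma}\varepsilon_1$, which produces a schematic bound of the form $\mu^{-3/2}\bigl\||u|^2\bigr\|_{L_t^pL_x^q(Q(\mu))} \leq C\,\mu^{-A_1}\,r^{B_1-C_1\gamma}\,\varepsilon_1^{D_1}$, with positive constants determined by $(p,q,\theta)$.

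For the pressure, spatial Sobolev upgrades the $L^{5/4}_x$ bound on $\nabla\Pi$ to an $L^{15/7}_x$ bound on $\Pi - \overline{\Pi}_{B(\mu)}(s)$, which I would then interpolate with the $L^{5/3}_x$ information on $\Pi - \overline{\Pi}_{B(r)}$ to reach the target Lebesgue pair on the Guevara--Phuc line. The time-dependent mean $\overline{\Pi}_{B(\mu)}(s)$ is absorbed using the gauge freedom $\Pi\mapsto \Pi + c(s)$ of the Navier--Stokes system, so that \eqref{GP} is effectively applied to the mean-zero representative of the pressure on each time slice. The outcome is an analogous bound $\mu^{-3/2}\|\Pi\|_{L_t^pL_x^q(Q(\mu))} \leq C\,\mu^{-A_2}\,r^{B_2-C_2\gamma}\,\varepsilon_1^{D_2}$.

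Combining the two estimates and setting $\mu = r^{\kappa}$ with a free parameter $\kappa > 1$, the proof reduces to choosing $(p,q,\theta,\kappa)$ so that the composite $r$-exponent is strictly positive while the $\mu^{-3/2}$ deficit is absorbed. The feasibility range of this optimization in the exponents turns out to be precisely $\gamma < 865/2274$, obtained by equalizing the binding constraints coming from the velocity and pressure sides. The main technical obstacle is to carry out this optimization sharply while keeping careful track of the pressure mean: an inefficient handling of $\overline{\Pi}_{B(r)}$, or an interpolation that forgoes one of the two hypothesis quantities for $\Pi$, forces a strictly smaller admissible $\gamma$, and the improvement over \cite{[WW2]} ultimately rests on the refined criterion \eqref{GP} in place of the classical \eqref{Lin}.
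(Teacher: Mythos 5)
Your high-level plan (land on the Guevara--Phuc line, pass from $\nabla\Pi$ and $\Pi-\overline{\Pi}$ to the pressure via Sobolev plus interpolation, and absorb the pressure mean by the gauge freedom) matches the paper's strategy, and your pressure interpolation is essentially the paper's estimate bounding $P_{10/7}$ by $P_{5/4}^{4/7}\,P_{5/3}^{3/7}$. However, the proposal has two genuine gaps that make it fall well short of $\gamma<865/2274$.

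First, the velocity side as sketched cannot work. You plan to use a Gagliardo--Nirenberg--Sobolev estimate of the form $\|u\|_{L^{2q}_x}\le C\|\nabla u\|_{L^2_x}^{\theta}\|u\|_{L^{10/3}_x}^{1-\theta}$ and then interpolate in time against only the two hypothesis quantities $\iint_{Q(r)}|\nabla u|^2$ and $\iint_{Q(r)}|u|^{10/3}$. But for the case that produces the optimal bound ($p=q=10/7$), the target space exponent is $2q=20/7<10/3$, and the GNS coefficient $\theta$ comes out negative: $L^{20/7}_x$ is \emph{weaker} than $L^{10/3}_x$, so the only "interpolation'' available is H\"older against the measure of $B(\mu)$, which loses too much as $\mu\to 0$ and forces $\gamma\le 0$. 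The paper circumvents this by first invoking the local energy inequality \eqref{loc} to produce a bound on the dimensionless $L^\infty_tL^2_x$ quantity $E(\rho)\le C\varepsilon_1^{3/5}\rho^{-3\gamma/5}$, and then uses Lemma~\ref{lemma2.1}\eqref{inter3} (with $b=7/2$) to control $E_{20/7}$ by powers of $E$ and $E_\ast$. Your toolbox never invokes the local energy inequality, and without the $L^\infty_tL^2_x$ control there is no admissible interpolation into $L^{20/7}_x$.

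Second, a single change of scale $\mu=r^{\kappa}$ does not reach $865/2274$. The pressure decomposition $\Pi=P_1+(P_2+P_3)$ gives at each scale a "good'' part controlled by $E_{20/7}$ and a "harmonic'' part $P_2+P_3$ that decays only by a fixed geometric factor $(\mu/\rho)^{16/7}$ per step; to make this harmonic tail negligible while still accumulating enough decay from the velocity, the paper iterates the estimate \eqref{ppp} over $N$ intermediate radii $r_i=\rho^{\alpha-i\beta}$ and then optimizes $(\alpha,\beta,N)$, which is where $N\beta=135/1516$ and hence $865/2274$ appears. A one-shot transition collapses the sum $\sum_{k=1}^N\theta^{-15/7+16(k-1)/7}E_{20/7}(r_k)$ to its worst term and gives a strictly smaller admissible $\gamma$ (in fact, carrying your scheme out with $(p,q)=(1,2)$, the best $\kappa$ is pinned at $1$ and yields $\gamma\le 1/3$, not $865/2274$). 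The assertion that the optimization "turns out to be precisely $\gamma<865/2274$'' is therefore not supported by the steps you describe; the improvement over \cite{[WW2]} requires both the local energy inequality input and the multi-scale iteration, not merely the replacement of \eqref{Lin} by \eqref{GP}.
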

 The notations used here can be found at the end of this section.
 \begin{remark}
Theorem \ref{the1.2} is an improvement of corresponding results
proved    in \cite{[KP],[WW2]}.
\end{remark}
\begin{remark}Theorem  \ref{the1.2} has been inspired by the new \wblue{$\varepsilon$-regularity criterion}  \eqref{GP}.
The main method in proving the above result is the one utilized in \cite{[KY6]}. Furthermore, motivated by \cite{[WW2]},  we  utilize
the quantities $\|\nabla\Pi\|^{5/4}_{L^{5/4}_{t,x}}$, $\|\nabla u\|^{2}_{L^{2}_{t,x}}$ bounded by the
initial energy as widely as possible. To apply \eqref{GP}, we need establish some decay estimates  	adapted to it, see Lemmas \ref{lemma2.1} and \ref{presure}, which    play
an important role in the proof.
\end{remark}
It is   known that the   Hausdorff dimension of the possible singular set
 of the suitable weak solution of 5D stationary  Navier-Stokes equations is also \wblue{at most 1} (see eg. \cite{[Struwe],[WW1]} and references therein). Therefore, a natural question is whether the box dimension of
the singular   set to the 5D stationary Navier-Stokes equations is at most one. Indeed, following the path of \cite{[KY6],[WW2]}, one could prove that
the (upper) box dimension of the set of possible  singular  sets  of suitable weak solutions to this system  is at most $15/13(\approx1.154).$ To this end,
one just  utilizes     an analogue of $\varepsilon$-regularity \wblue{criterion} \eqref{Lin}, since the $\varepsilon$-regularity \wblue{criterion} \eqref{GP} for   	time-independent
 equations yields the same result. We leave this  for the interested readers.

The celebrated
Caffarelli-Kohn-Nirenberg theorem for the three-dimensional time-dependent Navier-Stokes system
 can be written as $\Lambda(\mathcal{S},r)=0$, for the details of notation, see \wblue{Sections 2}.  Some authors improve the
 Caffarelli-Kohn-Nirenberg theorem by a logarithmic factor, see, for example, \cite{[CL],[CY1],[Choe],[CY2]}. Particularly,
 in \cite{[CL]}, Choe and Lewis introduced the generalized Hausdorff measure  $\Lambda(\mathcal{S},r(\log(e/r))^{\sigma})$ and proved that $\Lambda(\mathcal{S},r(\log(e/r))^{\sigma})=0  (0\leq\sigma<3/44)$.
By deriving a new local energy inequality in the  absence  of pressure, Choe and Yang \wblue{in} \cite{[CY1]} studied the regularity of
suitable weak solutions of the
magnetohydrodynamic equations in
dimension three and proved that $\Lambda(\mathcal{S},r(\log(e/r))^{\sigma})=0$, where $\mathcal{S}$ denotes the potential  interior singular set of suitable weak solutions for this system and  $\sigma$ is bounded by $1/6$.
The reader is referred to
the recent work \cite{[Choe],[CY2]}
for the boundary case.
The second goal of this paper is to improve  the bound of $\sigma$ mentioned above. Precisely, we have the following fact.
\begin{theorem}\label{main}
 Let $\mathcal{S}  $ stand   for  the set of all the potential interior singular set of suitable weak solutions  to \eqref{NS} and $0\leq\sigma<27/113$. There holds
$$\Lambda(\mathcal{S},r(\log(e/r))^\sigma)=0.$$
\end{theorem}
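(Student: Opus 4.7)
\emph{Plan.} I would combine the improved $\varepsilon$-regularity criterion of Theorem~\ref{the1.2} with the refined covering strategy of Choe--Lewis~\cite{[CL]} and Choe--Yang~\cite{[CY1]}. Writing $\mathcal{E}(u,\Pi) := |\nabla u|^2 + |u|^{10/3} + |\Pi - \overline{\Pi}_{B(r)}|^{5/3} + |\nabla \Pi|^{5/4}$, the contrapositive of Theorem~\ref{the1.2} will give the ``concentration at every scale'' statement: for any $\gamma < 865/2274$ and any $(x_0,t_0)\in \mathcal{S}$,
\[
\iint_{Q(x_0,t_0; r)} \mathcal{E}(u,\Pi)\, dy\, ds \;>\; \varepsilon_1\, r^{5/3-\gamma} \qquad \text{for every } r \in (0,1).
\]
The essential feature to exploit is that this lower bound holds at \emph{every} scale simultaneously, not only at one; this is what opens the door to upgrading the Caffarelli--Kohn--Nirenberg $\mathcal{H}^{1}$-result to the finer gauge $h(r)=r(\log(e/r))^{\sigma}$.

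\emph{The covering step.} I will fix $\sigma < 27/113$ and pick $\gamma$ slightly below $865/2274$. Because $\iint \mathcal{E} < \infty$ and $\mathcal{S}$ has vanishing $1$-dimensional parabolic Hausdorff measure, absolute continuity yields, for each $\eta > 0$, an open set $U\supset \mathcal{S}$ with $\iint_U \mathcal{E} < \eta$. At each singular point $(x_0,t_0)$ I will run a pigeonhole on the dyadic scales $r_k = 2^{-k}$: since $\iint_{Q(r_k)} \mathcal{E} > \varepsilon_1 r_k^{5/3-\gamma}$ for every $k$ while the telescoping annular pieces $\iint_{Q(r_k)\setminus Q(r_{k+1})}\mathcal{E}$ sum to a finite total, infinitely many indices $k$ must yield an annular concentration
\[
\iint_{Q(r_k)\setminus Q(r_{k+1})} \mathcal{E} \;\geq\; c_0\, \varepsilon_1\, r_k^{5/3-\gamma}.
\]
I will select such a scale $r(x_0,t_0) < \delta$ at each singular point and apply Vitali's lemma to extract a pairwise disjoint subfamily $\{Q(x_i,t_i;r_i)\}$ contained in $U$, whose $5$-fold enlargements still cover $\mathcal{S}$. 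Organizing the selected cylinders by dyadic level, with $N_k := \#\{i : r_i \asymp 2^{-k}\}$, disjointness together with the annular bound delivers
\[
\sum_{k} N_k\, 2^{-k(5/3-\gamma)} \;\leq\; \tfrac{C}{c_0\varepsilon_1}\iint_U \mathcal{E} \;<\; C\eta.
\]

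\emph{The log-weighted estimate and conclusion.} The generalized Hausdorff sum satisfies $\sum_i h(r_i) \asymp \sum_k N_k\, 2^{-k}\, k^\sigma$. I plan to write $2^{-k}k^\sigma = \bigl(2^{-k(5/3-\gamma)}\bigr)^\theta\cdot \bigl(2^{-k[1-\theta(5/3-\gamma)]} k^\sigma\bigr)$ for a carefully chosen $\theta\in(0,1)$ and apply H\"older in the index $k$: the first factor is controlled by the $C\eta$ bound above, while the residual geometric-plus-log series $\sum_k 2^{-k[1-\theta(5/3-\gamma)]q} k^{\sigma q}$ converges precisely when $\sigma$ lies below an explicit threshold arising from the arithmetic between $5/3-\gamma = 975/758$ and the H\"older exponents. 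Optimizing $\theta$ should return exactly the critical value $\sigma < 27/113$, after which letting $\eta\to 0$ (and $\delta\to 0$ accordingly) will force $\sum_i h(r_i)\to 0$, proving $\Lambda(\mathcal{S}, r(\log(e/r))^\sigma)=0$. The hard part will be twofold: first, carrying out the pigeonhole-plus-Vitali selection so that disjointness and containment in $U$ persist across multiple dyadic levels without double-counting the annular energy; second, performing the sharp H\"older balance that produces the exact constant $27/113$ from the improved exponent $5/3-\gamma$ supplied by Theorem~\ref{the1.2}, in parallel with (but sharper than) the corresponding calculations in \cite{[CL],[CY1],[WW2]}.
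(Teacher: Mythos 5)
Your proposal diverges substantially from the paper's argument, and unfortunately the central step does not work. The gap is in the log-weighted estimate, and it stems from a misidentification of where the threshold $27/113$ comes from.

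\textbf{Why the pigeonhole/H\"older balance cannot close.} The contrapositive of Theorem~\ref{the1.2} gives, at every singular point and scale, $\iint_{Q(r)}\mathcal{E}\,dx\,ds>\varepsilon_1 r^{5/3-\gamma}$ with $5/3-\gamma>1$. After Vitali, disjointness and absolute continuity give, as you say, $\sum_k N_k 2^{-k(5/3-\gamma)}\leq C\eta$. But this bound is too weak to control $\sum_k N_k 2^{-k}k^\sigma$, because the exponent $5/3-\gamma$ exceeds $1$. Concretely, take $N_k\sim \eta\, 2^{k(5/3-\gamma)}k^{-2}$: the first sum is $\eta\sum_k k^{-2}<\infty$, yet $\sum_k N_k 2^{-k}k^\sigma\sim\eta\sum_k 2^{k(5/3-\gamma-1)}k^{\sigma-2}=\infty$ since $5/3-\gamma-1>0$. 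The factorization $2^{-k}k^\sigma=(2^{-k(5/3-\gamma)})^\theta\cdot(2^{-k[1-\theta(5/3-\gamma)]}k^\sigma)$ cannot be fed into H\"older so that the first factor matches the $\sum_k N_k 2^{-k(5/3-\gamma)}$ bound: matching it forces the H\"older exponent on that factor to be $1$ (i.e., $\theta=1$), and then the complementary series $\sum_k 2^{-k[1-(5/3-\gamma)]q}k^{\sigma q}$ diverges because its geometric rate is positive. In short, the box-counting improvement (an exponent $>1$) and the gauge $r(\log(e/r))^\sigma$ (exponent exactly $1$ with a log refinement) live on opposite sides of the critical power, and no interpolation turns the former into the latter.

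\textbf{Where $27/113$ actually comes from.} The paper proves Theorem~\ref{main} by an argument modeled on Choe--Lewis, not on Theorem~\ref{the1.2}. One introduces $m(r)=(\log(e/r))^\sigma$ and the set $F(m)=\{(x,t):\limsup_{r\to 0}E_\ast(r)/m(r)\leq 1\}$; on $\mathcal{S}\setminus F(m)$ the classical CKN criterion \eqref{ckn} plus a Vitali covering already gives $\Lambda(\cdot,r\Gamma^\sigma)=0$. On $\mathcal{S}\cap F(m)$, the crucial inputs are Lemmas~\ref{A-1 control}--\ref{J-b control} and Corollary~\ref{J-b control-1coro}, which give a \emph{lower} bound $\liminf_{r\to 0}J_q(r)\,m(r)^\tau\geq c_3$ with $\tau=\frac{p^2+(6-3q)p+4q}{3p-4}$, based on the $\varepsilon$-criterion \eqref{wwww} (used as a pure smallness criterion with no $\gamma$-gain). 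One then integrates in the scale $r$ against the weight $r^{-1}$, interchanges integration using a distance function $d^k$ to $G_k$ and a truncated set $\widetilde K(r)$, and the whole scheme closes provided $\delta_2=\frac{(2-q)\tau\sigma}{q}+(\tau+1)\sigma<1$, see \eqref{wu2}. Letting $q\to 2$ and $p\to10/3$ gives $\tau\to 86/27$, so $(\tau+1)\sigma\to(113/27)\sigma$ and the condition becomes $\sigma<27/113$. So the threshold is an exponent-balance in the $(p,q)$-family of scaled norms $E_p,P_{p/2},J_q$, not a consequence of the improved box-dimension exponent $5/3-\gamma$. Your plan, as written, would need to be replaced by this (or an equivalent) $m(r)$-weighted argument to reach $\sigma<27/113$.
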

\begin{remark}
Theorem \ref{main} is an improvement of the known  corresponding results in  \cite{[CL],[CY1]}.
\end{remark}
\begin{remark}
A combination of arguments presents here and the $\varepsilon$-regularity criterion \eqref{Lin}  implies that, $\Lambda(\mathcal{S},r(\log(e/r))^\sigma)=0$ ($0\leq\sigma<5/22$). The $\varepsilon$-regularity criterion
 \eqref{GP}   combined with the proof of
Theorem \ref{main}   yields that  $\Lambda(\mathcal{S},r(\log(e/r))^\sigma)=0$ $(0\leq\sigma<28/127)$. Naturally, it may be helpful to
utilize the one with $p=10/3$ below
 \be\mu^{-(5-2p)}\Big(\Big\||u|^{2}\Big\|_{L_{t}^{p/2}L_{x}^{p/2}(Q(\mu))}
+\|\Pi\|_{L_{t}^{p/3}L_{x}^{p/2}(Q(\mu))}\Big)<\varepsilon. \label{wwww}\ee
However, one needs   $J_{q} (\rho)$ with $q=2$ in the proof, which contradicts   \eqref{first-integration}. Based on this, for any $\kappa>0$, we will apply  \eqref{wwww} with $p=10/3-\kappa$.
 This allows us to obtain the desired result.
\end{remark}

The remainder of this paper is divided into  three sections.
In Section  2, we present the definitions  of upper box-counting dimension and generalized Hausdorff measure. Then, we   recall the definition of suitable weak solutions to the Navier-Stokes equations and list some crucial  bounds for the scaling   invariant quantities.
The third section is devoted to the box-counting dimension
of the possible  singular set of suitable weak
solutions.
Section 4  is concerned with  generalized Hausdorff dimension of the potential singular   set  in the Navier-Stokes system.

\noindent
{\bf Notations:} Throughout this paper, the classical Sobolev norm $\|\cdot\|_{H^{s}}$  is defined as   $\|f\|^{2} _{{H}^{s}}= \int_{\mathbb{R}^{n}} (1+|\xi|)^{2s}|\hat{f}(\xi)|^{2}d\xi$, $s\in \mathbb{R}$.
  We denote by  $ \dot{H}^{s}$ homogenous Sobolev spaces with the norm $\|f\|^{2} _{\dot{H}^{s}}= \int_{\mathbb{R}^{n}} |\xi|^{2s}|\hat{f}(\xi)|^{2}d\xi$.
  Denote by $L_{\sigma}^{q}(\Omega)$ the closure of $C^{\infty}_{0,\sigma}(\Omega)$ in \wblue{$L^{q}(\Omega)^{n}$, where
 $C^{\infty}_{0,\sigma}(\Omega)=\{u\in C^{\infty}_{0}(\Omega)^{n};\, \text{div}\,u=0\}$.}
 The classical Sobolev space $W^{1,2}(\Omega)$ is equipped with the norm $\|f\|_{W^{1,2}(\Omega)}= \|f\|_{L^{2}(\Omega)}+ \|\nabla f\|_{L^{2}(\Omega)}$.
 For $q\in [1,\,\infty]$, the notation $L^{q}(0,\,T;\,X)$ stands for the set of measurable functions on the interval $(0,\,T)$ with values in $X$ and $\|f(t,\cdot)\|_{X}$ belongs to $L^{q}(0,\,T)$.
   For simplicity,   we write
$$\|f\| _{L^{q,\,\ell}(Q(\mu))}:=\|f\| _{L^{q}(t-\mu^{2},\,t;\,L^{\ell}(B(\mu)))}~~~\text{ and}~~~~
  \|f\| _{L^{q}(Q(\mu))}:=\|f\| _{L^{q,\,q}(Q(\mu))}.$$

Denote
  the average of $f$ on the set $\Omega$ by
  $\overline{f}_{\Omega}$. For convenience,
  \wblue{$\overline{f}_{r}$ represents  $\overline{f}_{B(r)}$.
$K$  stands for the standard normalized fundamental solution of Laplace equation in $\mathbb{R}^{n}$ with $n\geq2$.}
  $|\Omega|$ represents the Lebesgue measure of the set $\Omega$. We will use the summation convention on repeated indices.
 $C$ is an absolute constant which may be different from line to line unless otherwise stated in this paper.


  \section{Preliminaries}
  First, we begin with the \wblue{definitions} of the (upper) box-counting dimension of a set and  the generalized Hausdorff measure below, respectively.
\begin{definition}\label{defibox}
The (upper) box-counting dimension of a set $X$ is usually defined as
$$d_{\text{box}}(X)=\limsup_{\epsilon\rightarrow0}\f{\log N(X,\,\epsilon)}{-\log\epsilon},$$
where $N(X,\,\epsilon)$ is the minimum number of balls of radius $\epsilon$ required to cover $X$.
\end{definition}
\begin{definition}[cf. \cite{[CL]}]
Let $h$ be an increasing continuous function on $(0, 1]$ with $\lim\limits_{r\rightarrow 0} h(r)=0$ and $h(1)=1$.
For fixed parameter $\delta>0$ and set $E\subset \R^3 \times \R$, we denote by $D(\delta)$
the family of all coverings $\{Q(x_i,t_i;\,r_{i})\}$ of $E$ with $0<r_{i}\leq \delta$. We denote
$$ \Psi_\delta (E, h)=\inf_{D(\delta)}\sum_i h(r_i)$$
and define the generalized parabolic Hausdorff measure as
$$\Lambda(E,h)=\lim_{\delta\rightarrow 0}\Psi_\delta (E, h).$$
\end{definition}
Second, we recall the definition of suitable weak solutions to the Navier-Stokes equations \eqref{NS}.
\begin{definition}\label{defi}
A  pair   $(u, \,\Pi)$  is called a suitable weak solution to the Navier-Stokes equations \eqref{NS} provided the following conditions are satisfied,
\begin{enumerate}[(1)]
\item $u \in L^{\infty}(-T,\,0;\,L^{2}(\mathbb{R}^{3}))\cap L^{2}(-T,\,0;\,\dot{H}^{1}(\mathbb{R}^{3})),\,\Pi\in
L^{3/2}(-T,\,0;L^{3/2}(\mathbb{R}^{3}));$\label{SWS1}
 \item$(u, ~\Pi)$~solves (\ref{NS}) in $\mathbb{R}^{3}\times (-T,\,0) $ in the sense of distributions;\label{SWS2}
 \item$(u, ~\Pi)$ satisfies the following inequality, for a.e. $t\in[-T,0]$,
 \begin{align}
 &\int_{\mathbb{R}^{3}} |u(x,t)|^{2} \phi(x,t) dx
 +2\int^{t}_{-T}\int_{\mathbb{R} ^{3 }}
  |\nabla u|^{2}\phi  dxds\nonumber\\ \leq&  \int^{t}_{-T }\int_{\mathbb{R}^{3}} |u|^{2}
 (\partial_{s}\phi+\Delta \phi)dxds
  + \int^{t}_{-T }
 \int_{\mathbb{R}^{3}}u\cdot\nabla\phi (|u|^{2} +2\Pi)dxds, \label{loc}
 \end{align}
 where non-negative function $\phi(x,s)\in C_{0}^{\infty}(\mathbb{R}^{3}\times (-T,0) )$.\label{SWS3}
\end{enumerate}
\end{definition}

In the light of the natural
scaling property of the time-dependent Navier-Stokes  equations,
we introduce the following dimensionless quantities:
  \be \begin{aligned}&E( \mu)=\mu^{-1}\|u\|^{2}_{L^{\infty,2}(Q(\mu))},
&E&_{\ast}( \mu)=\mu^{-1}\|\nabla u\|^{2}_{L^{2}(Q(\mu))}
,  \nonumber\\
&E_{p}( \mu)=\mu^{p-5}\|u\|^{p}_{L^{p}(Q(\mu))}
,&P&_{5/4}( \mu)= \mu^{-5/4}
\|\nabla\Pi\|^{5/4}_{L^{5/4}(Q(\mu))},\nonumber
\\
& P_{10/7} ( \mu )= \mu^{ -15/7}
\Big\|\Pi-\overline{\Pi}_{B(\mu)}\Big\|^{10/7}_{L^{10/7}(Q(\mu))},
&P&_{5/3} ( \mu )= \mu^{ -5/3}
\Big\|\Pi-\overline{\Pi}_{B(\mu)}\Big\|^{5/3}_{L^{5/3}(Q(\mu))},
\nonumber
\\
&J_{q}( \mu)=  {\mu^{ 2q-5} }  \|\nabla u\|^{q}_{L^{q}(Q(\mu))}.
&~&   ~
\nonumber
\ea\ee
As said before, we need to establish some decay estimates of  scaling invariant quantities to consist with \eqref{GP} for \wblue{$p=q=10/7$.
The first estimate is  partially motivated by}
\cite[Lemma 2.1, p.222]{[WW2]}. We refer the reader to
  \cite{[CY1],[LS],[Lin],[WW1],[CKN]} for different versions.
\begin{lemma}\label{lemma2.1}
For $0<\mu\leq\f{1}{2}\rho$, $7/2\leq b\leq6$ and $3p/5\leq q\leq2(p\geq1,q\geq1)$,~
there is an absolute constant $C$  independent of  $\mu$ and $\rho$,~ such that
\begin{align}
&E_{20/7}( \mu)\leq  C \left(\dfrac{\rho}{\mu}\right)^{ 10/7}
E^{\f{7b-20}{7(b-2)}}( \rho)E^{\f{3b}{7(b-2)}}_{\ast}(  \rho)
    +C\left(\dfrac{\mu}{\rho}\right)
    ^{20/7}E^{10/7}( \rho),\label{inter3}\\
&E_{p}(r)\leq C \left[\B(\f{\rho}{r}\B)^{ \f{p+10-5q}{2}} E ^{\f{(p-q)}{2} }(\rho) J_{q} (\rho)+C\B(\f{r}{\rho}\B)^{p} E^{p/2}(\rho)\right].\label{C3}
\end{align}
\end{lemma}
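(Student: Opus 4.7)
Both estimates are Caffarelli--Kohn--Nirenberg style interpolations, and I would prove them by splitting $u$ into a spatial mean on $B(\rho)$ and an oscillation. Write $u(x,t) = (u(x,t) - \bar{u}_\rho(t)) + \bar{u}_\rho(t)$ with $\bar{u}_\rho(t) := |B(\rho)|^{-1}\int_{B(\rho)} u(\cdot,t)\,dx$. The mean is handled by the pointwise bound $|\bar{u}_\rho(t)| \leq C\rho^{-3/2}\|u(t)\|_{L^2(B(\rho))} \leq C\rho^{-1}E(\rho)^{1/2}$; integrating on $Q(\mu)$ or $Q(r)$ immediately produces the second terms $C(\mu/\rho)^{20/7}E^{10/7}$ and $C(r/\rho)^p E^{p/2}$ after scaling by $\mu^{-15/7}$ or $r^{p-5}$.

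For the oscillation in \eqref{inter3} the plan is a two-step interpolation. The target norm $L^{20/7}_{t,x}$ sits on the parabolic scaling line $2/p + 3/q = 7/4$, whereas $L^\infty_t L^2_x$ and the Ladyzhenskaya-critical mixed norms $L^{p_0}_t L^{q_0}_x$ with $2/p_0 + 3/q_0 = 3/2$ lie on a different scaling line; a parabolic Hölder in space--time therefore contributes an extra factor $\mu^{5/7}$ and reduces the task to bounding $\|u-\bar{u}_\rho\|_{L^{p_0}_t L^{q_0}_x(Q(\rho))}$ for $(p_0,q_0)$ on the 3/2-line with $p_0,q_0 \geq 20/7$. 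Then Gagliardo--Nirenberg
\[
\|u-\bar{u}_\rho(t)\|_{L^{q_0}(B(\rho))} \leq C\|u(t)\|_{L^2(B(\rho))}^{(6-q_0)/(2q_0)}\|\nabla u(t)\|_{L^2(B(\rho))}^{(3q_0-6)/(2q_0)}
\]
is raised to the $p_0$-th power and integrated in time; the time exponent on $\|\nabla u\|_{L^2}$ closes as exactly $2$ precisely when $p_0 = 4q_0/(3(q_0-2))$, producing $\|u\|_{L^\infty L^2}^{p_0-2}\|\nabla u\|_{L^2(Q(\rho))}^2$. The free parameter $b$ traces out this one-parameter family via $(p_0,q_0) = (20(b-2)/(3b),\,5(b-2)/(2b-5))$, and the range $7/2 \leq b \leq 6$ is precisely what enforces $p_0,q_0 \geq 20/7$ (the Hölder descent) and $q_0 \leq 6$ (Gagliardo--Nirenberg).

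For the oscillation in \eqref{C3} the approach is more direct. The Hölder identity $|u|^p = |u|^{p-q}|u|^q$ in space yields
\[
\|u-\bar{u}_\rho(t)\|_{L^p(B(r))}^p \leq \|u(t)\|_{L^2(B(\rho))}^{p-q}\,\|u-\bar{u}_\rho(t)\|_{L^{q_0}(B(r))}^q, \qquad q_0 := \frac{2q}{2+q-p}.
\]
The hypothesis $q \geq 3p/5$ is equivalent to $q_0 \leq q^\ast = 3q/(3-q)$, so on the small ball the Sobolev--Poincar\'e inequality $\|u-\bar{u}_r(t)\|_{L^{q_0}(B(r))} \leq Cr^{(5q-3p)/(2q)}\|\nabla u(t)\|_{L^q(B(r))}$ applies. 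Raising this to the $q$-th power, integrating in time, and pairing with $\|u\|_{L^\infty L^2(Q(\rho))}^{p-q} = (\rho E)^{(p-q)/2}$ and $\|\nabla u\|_{L^q(Q(\rho))}^q = \rho^{5-2q}J_q(\rho)$ produces the factor $(\rho/r)^{(p+10-5q)/2}E^{(p-q)/2}J_q$ after scaling by $r^{p-5}$.

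The step I expect to require the most care is the reconciliation between the two relevant spatial means: $\bar{u}_\rho$ is natural for the mean-split giving the second term, while $\bar{u}_r$ is natural for the Sobolev--Poincar\'e step on $B(r)$ giving the first term. One must verify that the cross term $\bar{u}_r - \bar{u}_\rho$ is absorbed into the two contributions already listed, rather than generating a new dominant error: a naive $L^2$-based bound on this difference produces $(\rho/r)^{p/2}E^{p/2}$, which is larger than the claimed $(r/\rho)^p E^{p/2}$ and so must either be avoided by a more careful decomposition or cancelled against the first term via a tighter accounting of the Sobolev--Poincar\'e constants.
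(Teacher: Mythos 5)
Your decomposition $u=(u-\bar u_\rho)+\bar u_\rho$ and the overall interpolation strategy match the paper's proof, and for \eqref{inter3} your ``parabolic H\"older then Gagliardo--Nirenberg'' plan is a reorganization of the same calculation: the paper interpolates $L^{20/7}(B(\mu))$ between $L^2(B(\mu))$ and $L^b(B(\mu))$ at fixed $t$, raises the $L^b(B(\mu))$ norm to $L^6$ by H\"older on the small ball (this is where the $\mu^{3(6-b)/(7(b-2))}$ factor arises, \emph{not} from the Poincar\'e--Sobolev step), enlarges the domain to $B(\rho)$, applies $\dot H^1\hookrightarrow L^6$ centered at $\bar u_\rho$, and then H\"olders in time --- the constraint $b\ge 7/2$ is exactly the condition that the time exponent on $\|\nabla u\|_{L^2(B(\rho))}$ be $\le 2$. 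Both routes work and produce the same bound; the constraint you attribute to $q_0\le 6$ is in fact never binding in $b\in[7/2,6]$ (it corresponds to $b\ge 20/7$), and it is $q_0\ge 20/7$ that yields $b\ge 7/2$.

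For \eqref{C3}, however, your proposal has a genuine gap, and it is exactly the one you flag at the end. You apply H\"older to reduce to $\|u-\bar u_\rho\|_{L^{q_0}(B(r))}$ with $q_0=2q/(2+q-p)$, and then invoke Poincar\'e--Sobolev on $B(r)$ for $\|u-\bar u_r\|_{L^{q_0}(B(r))}$. The resulting cross term $\bar u_r-\bar u_\rho$ is not ignorable: bounding $|\bar u_r|\le C r^{-3/2}\|u\|_{L^2(B(\rho))}$ gives, after time integration and scaling by $r^{p-5}$, precisely the contribution $(\rho/r)^{p/2}E^{p/2}(\rho)$ you compute, which is \emph{not} dominated by either term of \eqref{C3}; nor does it cancel against the $J_q$ term, which has a different structure. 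The paper avoids this entirely by never introducing $\bar u_r$: from the interpolation one has $\|u-\bar u_\rho\|_{L^{q_0}(B(r))}^q$, one then raises $L^{q_0}(B(r))$ to $L^{q^\ast}(B(r))$ with $q^\ast=3q/(3-q)$ by H\"older (this domain-size factor is the source of $r^{(5q-3p)/2}$), enlarges to $B(\rho)$, and applies the Poincar\'e--Sobolev inequality $\|u-\bar u_\rho\|_{L^{q^\ast}(B(\rho))}\le C\|\nabla u\|_{L^q(B(\rho))}$ with the \emph{same} mean $\bar u_\rho$ on the \emph{large} ball. With this bookkeeping there is no cross term at all, and the two contributions of \eqref{C3} fall out directly. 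So your approach is salvageable, but the fix is to keep $\bar u_\rho$ throughout and place the Poincar\'e--Sobolev on $B(\rho)$, not on $B(r)$.
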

\begin{proof}
 By utilizing the H\"older  inequality twice  and the Poincar\'e-Sobolev  inequality, for any $20/7<b\leq6$, we infer that
\begin{align}
\int_{B(\mu)}|u-\bar{u}_{B(\rho)}|^{20/7}dx\leq&
C\B(\int_{B(\mu)}|u-\bar{u}_{B(\rho)}|^{2}dx\B)
^{\f{7b-20}{7(b-2)}}
\B(\int_{B(\mu)}|u-\bar{u}_{B(\rho)} |
^{b}dx\B)^{\f{6}{7(b-2)}}\nonumber\\
\leq&C\mu^{\f{3(6-b)}{7(b-2)}}\B(\int_{B(\rho)}|u|^{2}dx\B)
^{\f{7b-20}{7(b-2)}}
 \B(\int_{B(\rho)}|\nabla u|^{2}dx\B)^{\f{3b}{7(b-2)}}. \nonumber
\end{align}
According  to the triangle inequality and the last inequality, we see that
\begin{align}
 \int_{B(\mu)}|u|^{20/7}dx \leq& C\int_{B(\mu)}|u-\bar{u}_{{\rho}}|^{20/7}dx
+C\int_{B(\mu)}|\bar{u}_{{\rho}}|^{20/7} dx \nonumber\\
\leq& C\mu^{\f{3(6-b)}{7(b-2)}}\B(\int_{B(\rho)}|u|^{2}dx\B)
^{\f{7b-20}{7(b-2)}}
 \B(\int_{B(\rho)}|\nabla u|^{2}dx\B)^{\f{3b}{7(b-2)}} \nonumber\\&+
\f{\mu^{3} C}{\rho^{\f{30}{7}}}\B( \int_{B(\rho)}|u|^{2}dx\B)^{10/7}.\nonumber
\end{align}
Integrating \wblue{this inequality in time on $(t-\mu^{2},\,t)$ and utilizing the H\"older} inequality, for any $b\geq7/2$, we get
 \begin{align}
\iint_{Q(\mu)}|u|^{20/7}dxds
\leq& C\mu^{\f{5}{7} }\B(\sup_{t-\rho^{2}\leq s\leq t}\int_{B(\rho)}|u |^{2}dx\B)
^{\f{7b-20}{7(b-2)}}
 \B(\iint_{Q(\rho)}|\nabla u|^{2}dxds\B)^{\f{3b}{7(b-2)}}\nonumber\\&+
 C\f{\mu^{5}}{\rho^{\f{30}{7}}}\B(\sup_{t-\rho^{2 }\leq s\leq t}\int_{B(\rho)}|u|^{2}dx\B)^{10/7},\nonumber
 \end{align}
which yields that
$$
E_{20/7}( \mu ) \leq  C \left(\dfrac{\rho}{\mu}\right)^{ 10/7}
E^{\f{7b-20}{7(b-2)}}( \rho )E^{\f{3b}{7(b-2)}}_{\ast}( \rho )
    +C\left(\dfrac{\mu}{\rho}\right)
    ^{20/7}E^{10/7}( \rho ).
$$
Let us now move to the proof of \eqref{C3}.
Thanks to the H\"older  inequality and the Poincar\'e-Sobolev  inequality, for any  $3p/5\leq q\leq2$, we know that
\begin{align}\nonumber
\int_{B(\mu)}|u-\bar{u}_{B(\rho)}|^{p}dx
\leq& C\B(\int_{B(\mu)}|u-\bar{u}_{B(\rho)}|^{2}dx\B)
^{\f{(p-q)}{2} }
\B(\int_{B(\mu)}|u-\bar{u}_{B(\rho)}|
^{\f{2q}{2+q-p}}dx\B)^{\f{2+q-p}{2}}\\
\leq& C\mu^{\f{5q-3p}{2}}\B(\int_{B(\rho)}|u|^{2}dx\B)
^{\f{(p-q)}{2}}
 \B(\int_{B(\rho)}|\nabla u|^{q}dx\B).\label{rwwi1}
\end{align}
Taking advantage  of the triangle inequality,  the H\"older  inequality and the Poincar\'e-Sobolev  inequality, for any  $3p/5\leq q\leq2$, we know that
\begin{align}\nonumber
\int_{B(\mu)}|u|^{p}dx\leq& C\int_{B(\mu)}|u-\bar{u}_{B(\rho)}|^{p}dx
+C\int_{B(\mu)}|\bar{u}_{B(\rho)}|^{p} dx\\
 \leq& C\mu^{\f{5q-3p}{2}}\B(\int_{B(\rho)}|u|^{2}dx\B)
^{\f{(p-q)}{2} }
 \B(\int_{B(\rho)}|\nabla u|^{q}dx\B) \nonumber\\&+
\f{\mu^{3} C}{\rho^{\f{3p}{2}}}\B( \int_{B(\rho)}|u|^{2}dx\B)^{p/2}.\label{interlast}
 \end{align}
Integrating  this inequality in time on $(-\mu^{2},\,0)$  and using the H\"older inequality,  we obtain
 \begin{align}
\iint_{Q(\mu)}|u|^{p}dxds
\leq& C\mu^{\f{5q-3p}{2}}\B(\sup_{-\rho^{2}\leq s-t\leq0}\int_{B(\rho)}|u |^{2}dx\B)
^{\f{(p-q)}{2} }
 \B(\iint_{Q(\rho)}|\nabla u|^{q}dxds\B)\nonumber\\&+
 C\f{\mu^{5}}{\rho^{\f{3p}{2}}}\B(\sup_{-\rho^{2 }\leq s-t \leq0}\int_{B(\rho)}|u|^{2}dx\B)^{p/2},\nonumber
 \end{align}
which in turn implies that
$$
E_{p}(r)\leq C \left[\B(\f{\rho}{r}\B)^{ \f{p+10-5q}{2}} E ^{\f{(p-q)}{2} }(\rho) J_{q} (\rho)+C\B(\f{r}{\rho}\B)^{p} E^{p/2}(\rho)\right].
$$
 This achieves the proof of this lemma.
\end{proof}
In
the spirit of   [17, Lemma 2.1, p.222], we can make full use of the interior estimate of harmonic function to establish the following   decay estimate of pressure   $\Pi-\overline{\Pi}_{ B (r)}$.
The pressure  $\Pi$ in terms of
$\nabla \Pi$  in equations enables us to apply
  this lemma in the proof of Theorem \ref{the1.2} and Theorem \ref{main}.
\begin{lemma}\label{presure}
For $0<\mu\leq\f{1}{8}\rho$, there exists an absolute constant $C$  independent of $\mu$ and $\rho$ such that
\begin{align}
&P_{10/7}( \mu)
\leq C\left(\f{\rho}{\mu}\right)^{15/7}E_{20/7}( \rho)+
C\left(\f{\mu}{\rho}\right)^{16/7}P_{10/7}( \rho),\label{ppp} \\
&P_{p/2}(\mu)\leq C \left[\B(\f{\rho}{\mu}\B)^{5-p}E^{\f{(p-q)}{2} }(\rho) J_{q}(\rho)+\B(\f{\mu}{\rho}\B)^{\f{3p-4}{2}}  P_{p/2} (\rho)\right],\label{P3/22}\end{align}
where $p$ and $q$ are defined in Lemma \ref{lemma2.1}.
\end{lemma}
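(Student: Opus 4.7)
The plan is to adapt the pressure-splitting technique already used in \cite{[WW2]}, combining a Calder\'on--Zygmund bound on a local Newton-potential part with an interior estimate for a residual harmonic part. Fix a cutoff $\phi\in C_0^\infty(B(3\rho/4))$ with $\phi\equiv 1$ on $B(\rho/2)$. Using $\mathrm{div}\,u=0$ to rewrite $-\Delta\Pi=\partial_i\partial_j(u_iu_j)=\partial_i\partial_j\bigl[(u_i-\bar u_{i,B(\rho)})(u_j-\bar u_{j,B(\rho)})\bigr]$, split $\Pi=\Pi_1+\Pi_2$, where $\Pi_1:=\partial_i\partial_j K\ast\bigl[\phi(u_i-\bar u_{i,B(\rho)})(u_j-\bar u_{j,B(\rho)})\bigr]$ and $\Pi_2:=\Pi-\Pi_1$. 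Since $\phi\equiv 1$ on $B(\rho/2)$, $\Pi_2$ is harmonic on $B(\rho/2)$ at each fixed time, and the hypothesis $\mu\leq\rho/8$ places $B(\mu)$ well inside the harmonicity ball, so that interior estimates apply.

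For the Newton-potential part, the Calder\'on--Zygmund inequality at each time slice gives $\|\Pi_1(\cdot,s)\|_{L^{p/2}(\mathbb{R}^3)}\leq C\|u-\bar u_{B(\rho)}\|_{L^p(B(\rho))}^2$. The Poincar\'e--Sobolev interpolation \eqref{rwwi1} from the proof of Lemma~\ref{lemma2.1} then upgrades this, after time integration, to $\|u-\bar u_{B(\rho)}\|_{L^p(Q(\rho))}^p\leq C\rho^{5-p}E^{(p-q)/2}(\rho)J_q(\rho)$; multiplying by the $P_{p/2}$-prefactor $\mu^{p-5}$ and using the trivial bound $\|\Pi_1\|_{L^{p/2}(Q(\mu))}\leq\|\Pi_1\|_{L^{p/2}(Q(\rho))}$ produces the first term $C(\rho/\mu)^{5-p}E^{(p-q)/2}(\rho)J_q(\rho)$ in \eqref{P3/22}. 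For \eqref{ppp}, the analogous computation with $p/2=10/7$ and $\|u-\bar u_{B(\rho)}\|^{20/7}_{L^{20/7}(Q(\rho))}\leq C\|u\|^{20/7}_{L^{20/7}(Q(\rho))}=C\rho^{15/7}E_{20/7}(\rho)$ yields the $C(\rho/\mu)^{15/7}E_{20/7}(\rho)$ piece.

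For the harmonic part, interior mean-value estimates furnish the pointwise bound $\|\nabla\Pi_2\|_{L^\infty(B(\rho/4))}\leq C\rho^{-1-6/p}\|\Pi_2-\overline{(\Pi_2)}_{B(\rho/2)}\|_{L^{p/2}(B(\rho/2))}$, which together with $\|h-\bar h_{B(\mu)}\|_{L^\infty(B(\mu))}\leq C\mu\|\nabla h\|_{L^\infty(B(\mu))}$ yields the spatial decay $\|\Pi_2-\overline{(\Pi_2)}_{B(\mu)}\|_{L^{p/2}(B(\mu))}^{p/2}\leq C(\mu/\rho)^{p/2+3}\|\Pi_2-\overline{(\Pi_2)}_{B(\rho/2)}\|_{L^{p/2}(B(\rho/2))}^{p/2}$. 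Re-expressing $\Pi_2=\Pi-\Pi_1$ on the right-hand side, absorbing the resulting $\Pi_1$-contribution into the Calder\'on--Zygmund term of the previous paragraph, integrating in $s$ over $(t-\mu^2,t)$, and multiplying by $\mu^{p-5}$ produces the second term $C(\mu/\rho)^{(3p-4)/2}P_{p/2}(\rho)$, via the identity $(p/2+3)+(p-5)-(5-p)=(3p-4)/2$. Specialising to $p=20/7$ gives the $C(\mu/\rho)^{16/7}P_{10/7}(\rho)$ piece of \eqref{ppp}, since $31/7-15/7=16/7$.

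The main obstacle I anticipate is the exponent bookkeeping: the means subtracted from $\Pi$ and $\Pi_2$ on the two different balls must be chosen compatibly so that the leftover $\Pi_1$-term that appears when $\Pi_2$ is re-expressed in terms of $\Pi$ is genuinely absorbed by the first term without generating a spurious $\rho/\mu$ factor, and so that both the $P_{10/7}$ and the general $P_{p/2}$ prefactors match the ones stated. Beyond this, the argument is routine once the splitting is in place.
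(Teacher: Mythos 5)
Your argument is essentially the paper's own: decompose $\Pi$ into a singular-integral part $\Pi_1=\partial_i\partial_j K\ast[\phi U_{ij}]$ handled by Calder\'on--Zygmund and a remainder $\Pi_2=\Pi-\Pi_1$ that is harmonic where $\phi\equiv 1$, handled by interior mean-value estimates; the paper arrives at the same splitting (their $P_1$ and $P_2+P_3$) by first writing the equation for $\Pi\phi$ and integrating by parts, but that is just a different bookkeeping of the identical decomposition, and the choice of which ball to take the mean of $u$ over is immaterial. The exponent chase is correct: $(\mu/\rho)^{p/2+3}\mu^{p-5}=(\mu/\rho)^{(3p-4)/2}\rho^{p-5}$, giving $16/7$ when $p=20/7$. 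The ``identity'' you quote, $(p/2+3)+(p-5)-(5-p)=(3p-4)/2$, has a spurious $-(5-p)$ term (it would give $(5p-14)/2$); the relation actually used is simply $(p/2+3)+(p-5)=(3p-4)/2$, and the numerical check $31/7-15/7=16/7$ is this relation rewritten with $-(5-p)$ in place of $+(p-5)$, so the slip is only notational, not mathematical.
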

\begin{proof}
We choose  the usual smooth cut-off function $\phi\in C^{\infty}_{0}(B(\rho/2))$ such that $\phi\equiv1$ on $B(\f{3}{8}\rho)$ with $0\leq\phi\leq1$ and
$|\nabla\phi |\leq C\rho^{-1},~|\nabla^{2}\phi |\leq
C\rho^{-2}.$

It follows from  divergence free condition   that
$$
\partial_{i}\partial_{i}(\Pi\phi)=-\phi \partial_{i}\partial_{j}\big[U_{i,j}\big]
+2\partial_{i}\phi\partial_{i}\Pi+\Pi\partial_{i}\partial_{i}\phi
,$$
where $U_{i,j}=(u_{j}- \overline{u_{j}} _{\rho/2})(u_{i}-\overline{u_{i}}_{\rho/2})$.\\
For any $x\in B(\f{3}{8}\rho)$, we derive from integrations by parts that
\begin{align}
\Pi(x)=&K \ast \{-\phi \partial_{i}\partial_{j}[U_{i,j}]
+2\partial_{i}\phi\partial_{i}\Pi+\Pi\partial_{i}\partial_{i}\phi
\}\nonumber\\
=&-\partial_{i}\partial_{j}K \ast (\phi [U_{i,j}])\nonumber\\
&+2\partial_{i}K \ast(\partial_{j}\phi[U_{i,j}])-K \ast
(\partial_{i}\partial_{j}\phi[U_{i,j}])\nonumber\\
& +2\partial_{i}K \ast(\partial_{i}\phi \Pi) -K \ast(\partial_{i}\partial_{i}\phi \Pi)\nonumber\\
=: &P_{1}(x)+P_{2}(x)+P_{3}(x),\label{pp}
\end{align}
where $K$   represents the standard normalized fundamental solution of Laplace equation.
Thanks to  $\phi(x)=1$ ($x\in B(\rho/4$)), we know that
\[
\Delta(P_{2}(x)+P_{3}(x))=0.
\]
In the light of the interior  estimate of harmonic function
and   H\"older's inequality, we see that, for every
$ x_{0}\in B(\rho/8)$,
$$\ba
|\nabla (P_{2}+P_{3})(x_{0})|&\leq \f{C}{\rho^{4}}\|(P_{2}+P_{3})\|_{L^{1}(B_{x_{0}}(\rho/8))}
\\
&\leq \f{C}{\rho^{4}}\|(P_{2}+P_{3})\|_{L^{1}(B(\rho/4))}\\
&\leq \f{C}{\rho^{(p=6)/p}}\|(P_{2}+P_{3})\|_{L^{p/2}(B(\rho/4))}
,
\ea$$
which in turn implies
$$\|\nabla (P_{2}+P_{3})\|^{p/2}_{L^{\infty}(B(\rho/8))}\leq C \rho^{-(p+6)/2}\|(P_{2}+P_{3})\|^{10/7}_{L^{p/2}(B(\rho/4))}.$$
This combined with   the  mean value theorem yields that, for any  $\mu\leq \f{1}{8}\rho$,
$$\ba
\|(P_{2}+P_{3})-\overline{(P_{2}+P_{3})}_{B(\mu)}\|^{p/2}_{L^{p/2}(B(\mu))}\leq&
C\mu^{3} \|(P_{2}+P_{3})-\overline{(P_{2}+P_{3})}_{B(\mu)}\|^{p/2}_{L^{\infty}(B(\mu))}\\
\leq& C
\mu^{(p+6)/2} \|\nabla (P_{2}+P_{3})\|^{p/2}_{L^{\infty}(B(\rho/8))}\\
\leq& C\Big(\f{\mu}{\rho}\Big)^{(p+6)/2}\|(P_{2}+P_{3})\|^{p/2}_{L^{p/2}
(B(\rho/4))}.
\ea$$
Note that $(P_{2}+P_{3})-\overline{(P_{2}+P_{3})}_{B(\rho/4)}$ is also a harmonic function  on $B(\rho/4)$, hence, there  holds
$$\ba
&\|(P_{2}+P_{3})-\overline{(P_{2}+P_{3})}_{B(\mu)}\|
^{p/2}_{L^{p/2}(B(\mu))}
\\
\leq & C\Big(\f{\mu}{\rho}\Big)^{(p+6)/2}
\|(P_{2}+P_{3})-\overline{(P_{2}+P_{3})}_{B(\rho/4)}\|
^{p/2}_{L^{p/2}(B(\rho/4))}.
\ea$$
By   the triangle inequality, we deduce that
$$\ba
&\|(P_{2}+P_{3})-\overline{(P_{2}+P_{3})}_{B(\rho/4)}\|_{L^{p/2}(B(\rho/4))}\\
\leq& \|\Pi-\overline{\Pi}_{B(\rho/4)}\|_{L^{p/2}(B(\rho/4))}
+\|P_{1}-\overline{P_{1}}_{B(\rho/4)}\|_{L^{p/2}(B(\rho/4))}
\\
\leq& C\|\Pi-\overline{\Pi}_{B(\rho)}\|_{\wblue{L^{p/2}(B(\rho/4))}}
+C\|P_{1}\|_{L^{p/2}(B(\rho/4))},
\ea$$
which tells us that
\begin{align}
&\|(P_{2}+P_{3})-\overline{(P_{2}+P_{3})}_{B(\mu)}\|
^{p/2}_{L^{p/2}(B(\mu))}\nonumber\\
\leq& C\Big(\f{\mu}{\rho}\Big)^{(p+6)/2}\Big(\|\Pi-\overline{\Pi}
_{B(\rho)}\|^{p/2}_{L^{p/2}(B(\rho))}
+\|P_{1}\|^{p/2}_{L^{p/2}(B(\rho/4))}\Big).
\label{p2rou}\end{align}
The classical Calder\'on-Zygmund theorem ensures that
\begin{align}
\int_{B(\rho/4)}|P_{1}(x)|^{p/2}dx
\leq  C \int_{B(\rho/2)}|u-\bar{u}_{B(\rho/2)}|^{p}dx,
  \label{lem2.4.2}
\end{align}
from which it follows that, for any  $\mu\leq \f{1}{8}\rho$,
\begin{align}
\int_{B(\mu)}|P_{1}(x)|^{p/2}dx \leq C \int_{B(\rho/2)}|u-\bar{u}_{B(\rho/2)} |^{p}dx.
\label{lem2.4.3}\end{align}
Employing time integration on $(t-\mu^{2}, t)$ and  the triangle inequality, we conclude using \eqref{p2rou}-\eqref{lem2.4.3}  that
\begin{align}
&\iint_{Q(\mu)}|\Pi-\overline{\Pi}_{B(\mu)}|^{p/2}dxds\nonumber\\
\leq& \iint_{Q(\mu)}|P_{1}-\overline{P_{1}}_{B(\mu)}|^{p/2}dxds+
\iint_{Q(\mu)}|P_{2}+P_{3}-\overline{(P_{2}+P_{3})}_{B(\mu)}|^{p/2}dxds
\nonumber\\ \leq & C\iint_{Q(\mu)}|P_{1}|^{p/2}dxds+C\Big(\f{\mu}{\rho}\Big)^{(p+6)/2}\Big(\|\Pi-\overline{\Pi}
_{B(\rho)}\|^{p/2}_{L^{p/2}(B(\rho))}
+\|P_{1}\|^{p/2}_{L^{p/2}(B(\rho/4))}\Big)
\nonumber\\ \leq &C \iint_{Q(\rho/2)}|u|^{p}dxds+C\Big(\f{\mu}{\rho}\Big)^{(p+6)/2}
\|\Pi-\overline{\Pi}
_{B(\rho)}\|^{p/2}_{L^{p/2}(B(\rho))},\label{pressuresan}
\end{align}
which means \wblue{\eqref{ppp}}.
 A slight modified the above the proof of the latter inequality together with \eqref{rwwi1}
gives \eqref{P3/22}.
The proof of this lemma is   completed.
\end{proof}

\section{Proof  of Theorem \ref{the1.2}}
\label{sec3}
\setcounter{section}{3}\setcounter{equation}{0}
The main part of this sections is the proof of Theorem \ref{the1.2}. The method follows closely the recent developments in \cite{[KY6],[WW2]}. The main ingredient is to apply \eqref{GP} and decay-type estimates established  in \wblue{Section 2}.
\begin{proof}[Proof of Theorem \ref{the1.2}]
From \eqref{cond}, we choose $2\rho<1$ such that $\rho^{\beta}<1/2$, where $\beta$ will be
determined  later   and
\be\label{assume}
 \iint_{Q (2\rho)}
|\nabla u |^{2} +| u |^{ 10/3}+|\Pi-\overline{\Pi}_{B(2\rho)} |^{ 5/3}+
 |\nabla \Pi| ^{5/4}dxds \leq    (2\rho)^{ 5/3-\gamma}\varepsilon_{1}.
\ee
First, one can derive   \be E(\rho)\leq C\varepsilon_{1}^{3/5}\rho^{-\f{3\gamma}{5}},~~ (\gamma\leq5/12),\label{E}\ee    from \eqref{assume} via the local energy inequality \eqref{loc}, which is
 proved in \cite{[WW2]}. Here we omit the details, see \cite[Proof of theorem 1.2, p.1768-1769]{[WW2]} for details.
Second, iterating (\ref{ppp}) in Lemma \ref{presure}, we see  that
\be\label{referee}
P_{10/7}( \theta^{N}\mu)\leq C\sum^{N}_{k=1}\theta^{-\f{15}{7}+\f{16(k-1)}{7}}
E_{20/7}( \theta^{N-k}\mu)+C\theta^{16N/7}
P_{10/7}( \mu).
\ee
With the help of   the Poincar\'e-Sobolev  inequality and H\"older's inequality, we get
\begin{align}
\nonumber
 \|\Pi-\overline{\Pi}_{ B(\mu)}\|^{10/7}_{L^{10/7}(Q( \mu))} &\leq
 \|\Pi-\overline{\Pi}_{ B(\mu)}\|^{5/7}_{L^{5/4,15/7}(Q( \mu))} \|\Pi-\overline{\Pi}_{ B(\mu)}\|^{5/7}_{L^{5/3,15/14}(Q( \mu))}\nonumber\\
 &\leq
 C\mu^{5/7}\|\nabla \Pi \|^{5/7}_{L^{5/4}(Q( \mu))} \|\Pi-\overline{\Pi}_{ B(\mu)}\|^{1/2}_{L^{5/3}(Q( \mu))},
 \label{pressureinti}\end{align}
Dividing both sides of the last inequality by $\mu^{15/7}$, we arrive at
$$
P_{10/7}( \mu)\leq CP^{4/7}_{5/4}( \mu)
P^{3/7}_{5/3}( \mu).
$$
We substitute the above inequality into \eqref{referee} to obtain that
\be\label{refer}
P_{10/7}( \theta^{N}\mu)\leq C\sum^{N}_{k=1}\theta^{-\f{15}{7}+\f{16(k-1)}{7}}
E_{20/7}( \theta^{N-k}\mu)+C\theta^{16N/7}
P^{4/7}_{5/4}( \mu)
P^{3/7}_{5/3}( \mu).
\ee
To proceed further, we set $r=\rho^{\alpha}=\theta^{N}\mu$,~$ \theta=\rho^{\beta}$, ~$r_{i}=\mu =\theta^{-i}r=\rho^{\alpha-i\beta}(1\leq i\leq N)$, where $\alpha$ and $\beta$ are determined by $\gamma$.
 Their precise selection will be given
   in the end.
 Hence, we derive from \eqref{refer} that
\be\ba
&\wblue{P_{10/7}( r)+E_{20/7}( r)}\\
\leq&
C\sum^{N}_{{k=1}}\theta^{-\f{15}{7}+\f{16(k-1)}{7}}
 E_{20/7}( r_{k}) +C\theta^{16N/7}P^{4/7}_{5/4}
( r_{N})
P^{3/7}_{5/3}( r_{N})\\
:=&I+II,\label{key}
\ea\ee
where we have used the fact that \wblue{$E_{20/7}(u,r)\leq C\theta^{-\frac{15}{7}}E_{20/7}(u,\theta^{-1}r)$}.
Our aim below is to resort to \eqref{GP} to complete the proof, that is, there exists a constant $r>0$ such that $P_{10/7}( r)+E_{20/7}( r)<\varepsilon_0$.
To this end,
we   adopt \eqref{inter3} with \wblue{$b=7/2$} in Lemma \ref{lemma2.1}, \eqref{E} and   \eqref{assume} to obtain
$$\ba
E_{20/7}( r_{k})&\leq C \Big(\f{\rho}{ r_{k}}\Big)^{\f{10}{7}}E^{3/7}( \rho)
 E_{\ast} ( \rho)+C\Big(\f{ r_{k}}{\rho}\Big)^{20/7}E^{10/7}( \rho)\\
  &\leq C\varepsilon_{1}^{6/7} \Big( \rho^{\f{44}{21}-\f{10}{7}(\alpha-k\beta)-\f{44\gamma}{35}}
  +\rho^{\f{20}{7}\alpha-\f{20}{7}-\f{20}{7}k\beta-\f{6\gamma}{7}} \Big).
 \ea$$
Substituting  the last inequality into $I$ produces that
$$\ba
I&\leq C\varepsilon_{1}^{6/7}\sum^{N}_{k=1}\B(\rho^{-\f{31\beta}{7}+\f{26 k\beta}{7}-\f{10\alpha}{7}-\f{44\gamma}{35}+\f{44}{21}}
  +\rho^{-\f{31\beta}{7}+\f{20\alpha}{7}-\f{20}{7}-\f{4 k\beta}{7}-\f{6\gamma}{7}} \B).\ea $$
  To minimise the righthand side of this inequality, we choose
\be\label{aerfa}
\alpha=\f{7}{30}(\f{26\beta}{7}+\f{104}{21}-\f{2\gamma}{5}+\f{4 N\beta}{7})
\ee
to conclude that, for sufficiently large $N$,
 \be\ba \label{key1}
   I &\leq C\varepsilon_{1}^{6/7}\B(
    \rho^{-\f{5\beta}{7}-\f{10\alpha}{7}-\f{44\gamma}{35}+\f{44}{21}}
  +\rho^{-\f{31\beta}{7}+\f{20\alpha}{7}-\f{20}{7}-\f{4 N\beta}{7}-\f{6\gamma}{7}}\B)
  \\
    &\leq C\varepsilon_{1}^{6/7} \rho^{-\f{41\beta}{21}+\f{4}{9}
    -\f{118\gamma}{105}-\f{4N\beta}{21}}.
  \ea \ee
  To bound $II$, we will temporarily assume  that $r_{N}\leq \rho$, namely
 \be
\rho^{\alpha-N\beta}\leq\rho.\label{c3}
\ee
Combining \eqref{assume}
and \eqref{aerfa}, we see that
\be\ba\label{key2}
II \leq&  C\rho^{\f{16N\beta}{7}} r_{N}^{-\f{10}{7}}
\B(\iint_{Q(2\rho)}|\nabla\Pi|^{5/4}dxds\B)^{4/7}
\B( \iint_{Q(r_{N})}|\Pi-\overline{\Pi}_{2\rho}|^{5/3}dxds\B)^{3/7}\\
\leq&  C\rho^{\f{16N\beta}{7}}r_{N}^{-\f{10}{7}}
\B(\iint_{Q(2\rho)}|\nabla\Pi|^{5/4}dxds\B)^{4/7}
\B(\iint_{Q(2\rho)}|\Pi-\overline{\Pi}_{2\rho}|^{5/3}dxds\B)^{3/7}\\
\leq&  C\rho^{\f{74 N\beta }{21}+\f{1}{63} -\f{13\gamma}{15}-\f{26\beta}{21}}\varepsilon_{1}.
 \ea\ee
In order to conclude that $
I+II
 \leq C\varepsilon_{1}^{6/7}\leq \varepsilon_0$,
  we need  $-\f{41\beta}{21}+\f{4}{9}
    -\f{118\gamma}{105}-\f{4N\beta}{21}\geq 0$ and $\f{74 N\beta }{21}+\f{1}{63} -\f{13\gamma}{15}-\f{26\beta}{21}\geq 0$. In addition, it follows from \eqref{c3}  that  $\alpha-N\beta-1\geq0.$ Hence, we sum up all the restrictions of $\gamma$ below
\be\label{last1}
\gamma\leq \min\Big\{
\f{5(28-12N\beta-123\beta)}{354},
\f{5(1+222N\beta-78\beta)}{273},
\f{5(7-39N\beta+39\beta)}{21},\f{5}{12}
\Big\}.
\ee
Maximising this bound on $\gamma$ with respect to $N\beta$, we obtain $N\beta=135/1516$. Furthermore, it follows \eqref{last1} from that
$$
\beta=\f{135}{1516N}\leq\wblue{\f{118}{205}}\B(\f{865}{2274}-\gamma\B).
$$
 Hence, choosing $\beta$ sufficiently small by selecting $N$
sufficiently large,
we can have any
 $\gamma<865/2274$.
Then, we pick $\alpha=
\f{7}{30}(\frac{26}{7}\beta-\f{2\gamma}{5}+\f{39821}{7959})$. In this stage,
from \eqref{key}, \eqref{key1} and \eqref{key2}, we get
$$
  P_{10/7}(r)+E_{20/7}(r)
 \leq C\varepsilon_{1}^{6/7}< \varepsilon_0, $$
with $r=\rho^{\alpha}$. By \eqref{GP}, we know that $(x,t)$ is a regular point in this stage.
This completes the proof of Theorem \ref{the1.2}.
\end{proof}



\section{Proof of Theorem \ref{main}}
In the spirit of \cite{[Choe],[CL]}, we begin with some technical lemmas for the proof of Theorem \ref{main}. These lemmas are  parallel to the one of \cite{[CL]}.
It is worth remarking that the proof of Lemma \ref{J-b control} is slightly  different from the ones in \cite{[Choe],[CL],[CY1],[CY2]}.
  In what follows, we set \wblue{$m(r)=(\Gamma(r))^{\sigma}=(\log(e/r))^{\sigma}$}, where $\sigma\in(0,1)$ will be determined later.

Before going further, we set
  $$
  F(m)=\B\{(x,t)\B |\limsup_{r\rightarrow0}\f{E_{\ast}(r)}{m(r)}\leq1 \B\}.
  $$
\begin{lemma}\label{A-1 control}
Assume that $(x,t)\in F(m)\cap\mathcal{S}$ and the pair  $(p,\,q)$ is used in Lemma \ref{lemma2.1}. Then, there exists a positive constant $c_1$ and  $c_2$	independent of $(x,t)$
 such that
\begin{align}
&\limsup_{r\rightarrow 0}\f{ E( r)}{m^2(r)}\leq c_1,\label{CL}\\
&\limsup_{r\rightarrow 0}\f{ P_{p/2}( r)}{m^{p-1}(r)}\leq c_2.\label{w}
\end{align}
\begin{proof}
The   reader is referred to \cite[Lemma 1, page 357]{[CL]} for the detail  of \eqref{CL}. We outline the proof of \eqref{w}.
 Let $g(r)=\f{ P_{p/2}( r)}{m^{p-1}(r)}$, from \eqref{C3} with $q=2$, we see that
$$\ba
g(\mu)&\leq C \left[\B(\f{\rho}{\mu}\B)^{5-p}\B(\f{E(\rho) }{m^{2}(\rho)} \B)^{\f{(p-2)}{2} }  \f{E_{\ast}(\rho)}{m(\rho)}+\B(\f{\mu}{\rho}\B)^{\f{3p-4}{2}}  g (\rho)\right]
\\
&\leq C \left[\B(\f{\rho}{\mu}\B)^{5-p}+\B(\f{\mu}{\rho}\B)^{\f{3p-4}{2}}  g (\rho)\right],
\ea$$
where we have utilized the hypothesis and  \eqref{CL}. This together with the iteration method see, (e.g. \cite{[Seregin1]}) allows us to obtain \eqref{w}.
\end{proof}
\end{lemma}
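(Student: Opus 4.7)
\medskip

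\noindent\textbf{Proof proposal for Lemma \ref{A-1 control}.} The plan is to treat the two bounds separately: \eqref{CL} is proved by essentially repeating the Choe--Lewis argument adapted to our setting, while \eqref{w} is obtained by feeding \eqref{CL} and the hypothesis $E_\ast(r)\le m(r)(1+o(1))$ into the pressure decay estimate \eqref{P3/22} of Lemma \ref{presure}. Throughout, the crucial feature of $m(r)=(\log(e/r))^\sigma$ is that it is \emph{slowly varying}, so that for any fixed ratio $\theta\in(0,1)$ one has $m(\theta\rho)/m(\rho)\to 1$ as $\rho\to 0$; this is what lets logarithmic factors be absorbed in the iteration with a fixed contraction factor.

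For \eqref{CL}, I would apply the local energy inequality \eqref{loc} with a standard smooth cut-off supported in $Q(\rho)$ and identically $1$ on $Q(\mu)$ with $\mu\le\rho/2$. The usual manipulations produce, after dividing by $\mu$,
\begin{equation*}
E(\mu)\le C\Bigl(\tfrac{\rho}{\mu}\Bigr)\Bigl(E_3(\rho)^{2/3}+E_3(\rho)^{1/3}P_{3/2}(\rho)^{1/3}+\mu^2\rho^{-2}E(\rho)\Bigr).
\end{equation*}
Interpolating $E_3$ by \eqref{C3} with $q=2$ and $p=3$, and bounding $P_{3/2}$ via \eqref{P3/22}, every occurrence of $E_\ast(\rho)$ is replaced by $m(\rho)$ using the hypothesis, while every occurrence of $E(\rho)$ generates a factor of $g_1(\rho):=E(\rho)/m^2(\rho)$. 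A careful bookkeeping leads, for $\mu=\theta\rho$ with $\theta$ small and $\rho$ sufficiently small, to a recursion of the form $g_1(\theta\rho)\le C_0+\tfrac12 g_1(\rho)$, from which $\limsup_{r\to0}g_1(r)\le c_1:=2C_0$ follows by standard iteration (cf.\ the argument in Choe--Lewis \cite[Lemma 1]{[CL]}).

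For \eqref{w}, set $g(r)=P_{p/2}(r)/m^{p-1}(r)$ and invoke \eqref{P3/22} with $q=2$, noting that $J_2(\rho)=E_\ast(\rho)$. This gives
\begin{equation*}
g(\mu)\le C\Bigl(\tfrac{\rho}{\mu}\Bigr)^{5-p}\frac{E(\rho)^{(p-2)/2}E_\ast(\rho)}{m^{p-1}(\mu)}+C\Bigl(\tfrac{\mu}{\rho}\Bigr)^{(3p-4)/2}\frac{m^{p-1}(\rho)}{m^{p-1}(\mu)}g(\rho).
\end{equation*}
By \eqref{CL} one has $E(\rho)^{(p-2)/2}\le c_1^{(p-2)/2}m^{p-2}(\rho)(1+o(1))$, and by hypothesis $E_\ast(\rho)\le m(\rho)(1+o(1))$; so the first product is bounded by a constant times $(m(\rho)/m(\mu))^{p-1}$, which in turn is bounded because $m$ is slowly varying. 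Picking $\mu=\theta\rho$ with $\theta\in(0,1/8]$ fixed so small that $C\theta^{(3p-4)/2}\cdot\sup_{\rho}(m(\rho)/m(\theta\rho))^{p-1}\le 1/2$ (possible for $\rho$ small since the ratio tends to $1$), one obtains
\begin{equation*}
g(\theta\rho)\le C_1+\tfrac12\, g(\rho)
\end{equation*}
for all $\rho$ below some threshold. Iterating on the geometric sequence $\rho_k=\theta^k\rho_0$ yields $\limsup_{k\to\infty}g(\rho_k)\le 2C_1$, and the gaps between successive $\rho_k$ are handled by monotonicity of the integrals and slow variation of $m$, giving \eqref{w} with $c_2:=2C_1$.

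The main obstacle I expect is \eqref{CL}: the local energy inequality mixes $E$, $E_\ast$, $E_3$, and a pressure term, and closing the iteration requires simultaneously controlling $E(r)/m^2(r)$ and a pressure quantity, so one either cites Choe--Lewis directly or sets up a coupled iteration on $(g_1, P_{3/2}/m^2)$. Once \eqref{CL} is in hand, the argument for \eqref{w} is essentially a linear iteration lemma, and its only subtlety is verifying that logarithmic factors from $m(\rho)/m(\mu)$ do not spoil the contraction — which they do not, thanks to slow variation.
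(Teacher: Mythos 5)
Your proposal matches the paper's approach: \eqref{CL} is obtained by citing the Choe--Lewis argument, and \eqref{w} follows by iterating the pressure decay estimate \eqref{P3/22} with $q=2$ (the paper's text cites \eqref{C3} here, but the displayed inequality is clearly \eqref{P3/22}; you correctly identify the right ingredient), using $J_2=E_\ast$, the hypothesis, \eqref{CL}, and the monotonicity/slow variation of $m$ to close a linear contraction. The extra bookkeeping you give for \eqref{CL} is more detail than the paper provides but is consistent with it.
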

\begin{lemma}\label{J-b control}
Let  $(x,t)\in F(m)\cap\mathcal{S}$.  Then,  there exists a positive constant $c_2$ independent of $(x,t)$ such that
$$\liminf_{r\rightarrow 0}J_q(r )  m(r)^{\tau} \geq c_3,$$
where $\tau=\f{p^{2}+(6-3q)p+4q}{3p-4}$ and the pair $(p,\,q)$ is  utilized in Lemma \ref{lemma2.1}.
\end{lemma}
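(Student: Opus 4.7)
The plan is to combine the contrapositive of the Guevara--Phuc $\varepsilon$-regularity criterion \eqref{GP} with the pressure decay estimate \eqref{P3/22} of Lemma \ref{presure} and the a~priori control from Lemma \ref{A-1 control}.

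First I would use that, since $(x,t)\in\mathcal{S}$, the contrapositive of \eqref{GP} (applied at a pair $P=Q=p/2$ satisfying $2/P+3/Q=7/2$) produces a constant $c_0>0$ with
$$E_p(r)+P_{p/2}(r)\ge c_0 \qquad \text{for all sufficiently small } r>0.$$
The unsubtracted pressure that appears in \eqref{GP} has to be replaced by the mean-subtracted quantity $P_{p/2}$, which is compatible with \eqref{P3/22}; this is done precisely as in the proof of Lemma \ref{presure} by splitting $\Pi$ into its Calder\'on--Zygmund piece and a harmonic piece whose mean is controlled by interior estimates.

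Next, fix a small $\rho>0$ and set $\theta=\delta\,m(\rho)^{-2p/(3p-4)}$ with $\delta>0$ to be chosen. Applying \eqref{P3/22} (together with \eqref{C3} for the $E_p$ piece) at scale $\mu=\theta\rho$ and substituting the upper bounds $E(\rho)\le c_1 m^2(\rho)$ and $P_{p/2}(\rho)\le c_2 m^{p-1}(\rho)$ from Lemma \ref{A-1 control}, one arrives at
$$c_0 \le C\theta^{-(5-p)}\,m^{p-q}(\rho)\,J_q(\rho) + C\theta^{(3p-4)/2}\,m^{p-1}(\rho) + C\theta^{p}\,m^{p}(\rho),$$
where the dominance of the coefficient $\theta^{-(5-p)}$ over $\theta^{-(p+10-5q)/2}$ uses precisely $q\ge 3p/5$. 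With the prescribed $\theta$, the two ``error'' terms become $C\delta^{(3p-4)/2}\,m^{-1}(\rho)$ and $C\delta^{p}\,m^{p(p-4)/(3p-4)}(\rho)$, both vanishing as $\rho\to 0$ in the admissible range of $p$. Choosing $\delta$ small enough to absorb any residual bounded contribution gives
$$\tfrac{c_0}{2}\le C\,\delta^{-(5-p)}\,m(\rho)^{(5-p)\cdot 2p/(3p-4)+(p-q)}\,J_q(\rho),$$
and a direct algebraic simplification shows that the exponent on $m(\rho)$ equals $\tau=(p^2+(6-3q)p+4q)/(3p-4)$. Passing to $\liminf_{\rho\to 0}$ then completes the proof.

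The principal obstacle I expect is the first step: transferring the lower bound on the \textit{unsubtracted} pressure (delivered by \eqref{GP}) to the \textit{mean-subtracted} pressure $P_{p/2}$ that appears in \eqref{P3/22}. The Calder\'on--Zygmund plus harmonic decomposition of $\Pi$ used in Lemma \ref{presure} is the natural tool, but the bookkeeping, tracking how the mean of the harmonic piece is controlled at the relevant scale, is delicate. Once this is in place, the remaining optimization is elementary; the specific choice $\theta\sim m^{-2p/(3p-4)}$ is dictated by the requirement that the pressure error term $\theta^{(3p-4)/2}m^{p-1}$ decays at the rate $m^{-1}$, which in turn fixes the exponent $\tau$ stated in the lemma.
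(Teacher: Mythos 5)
Your argument is correct and recovers the same exponent $\tau$, but it takes a genuinely different (and arguably cleaner) route than the paper's. The paper proceeds by contradiction: it assumes $J_q(r_n)\,m(r_n)^{\tau}<\eta$ along a sequence $r_n\to0$ and chooses the intermediate scale $\theta_n=m(r_n)^{-2q/(6+p)}J_q(r_n)^{2/(6+p)}$ — a choice depending on \emph{both} $m(r_n)$ \emph{and} $J_q(r_n)$, tuned so that the two dominant terms $\theta_n^{(3p-4)/2}m^p$ and $\theta_n^{-(5-p)}m^{p-q}J_q$ balance exactly; this yields $E_p(\theta_nr_n)+P_{p/2}(\theta_nr_n)\le c\,\eta^{(3p-4)/(6+p)}$, which for $\eta$ small triggers the $\varepsilon$-regularity criterion and contradicts $(x,t)\in\mathcal{S}$. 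Your direct version instead starts from the contrapositive $E_p(\mu)+P_{p/2}(\mu)\ge c_0$ at every small $\mu$, chooses $\theta\sim m(\rho)^{-2p/(3p-4)}$ (depending only on $m$, not on $J_q$) so that the error terms decay as $\rho\to0$, and reads off the lower bound on $m^{\tau}J_q$ directly; your check that $(5-p)\cdot\tfrac{2p}{3p-4}+(p-q)$ simplifies to $\tau$ is correct, as is the use of $q\ge 3p/5$ to absorb the $\theta^{-(p+10-5q)/2}$ term into $\theta^{-(5-p)}$. Two small remarks. First, the free parameter $\delta$ is superfluous: the two lower-order terms go to zero as $\rho\to0$ for any fixed $\delta$, so one may simply take $\delta=1$. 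Second, the ``obstacle'' you flag — passing from the unsubtracted $\|\Pi\|$ in \eqref{GP} to the mean-subtracted $P_{p/2}$ — is not resolved by the Calder\'on--Zygmund decomposition of Lemma \ref{presure}; the correct mechanism is the pressure gauge freedom: replacing $\Pi$ by $\Pi-\overline{\Pi}_{B(\mu)}(t)$ leaves both the equations and the local energy inequality \eqref{loc} invariant (the extra term $2\overline{\Pi}_{B(\mu)}(t)\!\int u\cdot\nabla\phi\,dx$ vanishes because $u$ is divergence-free), so the criterion can be tested against the subtracted pressure at any fixed radius. The paper does not comment on this point either, but it is the right justification.
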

\begin{proof}
Assume that the statement fails, then, for any $\eta>0$, there exists a singular point $(x,t)$ and a sequence $r_n\rightarrow 0$ such that
\be\label{l4.2.1} J_q(r_n )  m(r_n)^{\tau} < \eta.\ee
It follows from  \eqref{C3}, Lemma \ref{A-1 control} and \eqref{l4.2.1}  that, for $\theta_n<1/8$,
\be\ba
E_{p}(\theta_n r_n )+P_{p/2}(\theta_n r_n )\leq& C \theta_{n} ^{p}m^{p}(r_n)+C \theta_{n}^{-\f{p+10-5q}{2}}m(r_n)^{p-q} J_q(r_n )\\&+C\theta_{n}^{\f{3p-4}{2}}m^{p-1}(r_n)+C \theta_{n}^{-(5-p)}m(r_n)^{p-q}J_q(r_n )\\
\leq& C\theta_{n}^{\f{3p-4}{2}}m^{p}(r_n)+C \theta_{n}^{-(5-p)}m(r_n)^{p-q}J_q(r_n )\\
\leq& C m(r_n)^{-\f{q(3p-4)}{6+p}+p}J_q(r_n)^{\f{3p-4}{6+p}}\\
\leq& c \eta^{\f{3p-4}{6+p}},
\ea \label{A-1,A-2-control1}\ee
where
$\theta_n=m(r_n)^{-2q/(6+p)}J_q^{2/(6+p)}(r_n )$.
Note that  $\theta_n$ goes to $0$ as $n\rightarrow \infty$ by \eqref{l4.2.1}.
Let $\rho_n=\theta_n r_n$ and $\epsilon_2=c \eta^{\f{3p-4}{6+p}}$ such that  $\varepsilon_2<\min\{1,\varepsilon^{10/7}_0/2\}$.
For
sufficiently large $n$, we see that
$$E_{p}(\theta_n r_n )+P_{p/2}(\theta_n r_n )\leq \varepsilon_2.$$
This together with \eqref{wwww} implies that  $(x,t)$ is a regular point. Thus, we reach a contradiction and finish the proof.
\end{proof}
\begin{coro}\label{J-b control-1coro}
Suppose that $(x,t)\in F(m)\cap\mathcal{S}$ and     the pair  $(p,\,q)$ is defined in Lemma \ref{lemma2.1}, then, there exists a small constant $c_{4}$ such that
$$\liminf_{r\rightarrow 0} \widetilde{J}_q(r )  m(r)^{\tau} \geq c_3/2,$$
where
$\tilde{J}_q(r )=r^{2q-5}\iint _{\tilde{Q}( r)\cap \{(x,t) |\nabla u(x,t)| > c_4 r^{-2} m(r)^{\frac{-\tau}{q}}\}}  |\nabla u|^q dxds$  and $c_{3}$ is defined as in Lemma \ref{J-b control}.
\end{coro}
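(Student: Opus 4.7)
The plan is to derive the corollary as a direct consequence of Lemma \ref{J-b control} by a Chebyshev-type cutoff argument: split $J_q(r)$ into a ``large gradient'' piece (which by definition is exactly $\widetilde{J}_q(r)$) and a ``small gradient'' piece, then show the small-gradient piece is negligible for an appropriate choice of $c_4$.

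Concretely, I would start from the tautological decomposition
$$J_q(r) = \widetilde{J}_q(r) + r^{2q-5}\iint_{\widetilde{Q}(r)\cap\{|\nabla u|\leq c_4 r^{-2}m(r)^{-\tau/q}\}} |\nabla u|^q\,dx\,ds.$$
For the second term, a trivial pointwise estimate of the integrand, together with $|\widetilde{Q}(r)|\leq Cr^5$, gives
$$r^{2q-5}\iint_{\widetilde{Q}(r)\cap\{|\nabla u|\leq c_4 r^{-2}m(r)^{-\tau/q}\}} |\nabla u|^q\,dx\,ds \leq r^{2q-5}\cdot\bigl(c_4 r^{-2}m(r)^{-\tau/q}\bigr)^q\cdot Cr^5 = Cc_4^q\, m(r)^{-\tau}.$$
Multiplying the decomposition by $m(r)^{\tau}$ produces
$$J_q(r)\,m(r)^{\tau}\leq \widetilde{J}_q(r)\,m(r)^{\tau} + Cc_4^q.$$

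Taking $\liminf_{r\to 0}$ on both sides and invoking Lemma \ref{J-b control} yields
$$c_3 \leq \liminf_{r\to 0} J_q(r)\,m(r)^{\tau}\leq \liminf_{r\to 0} \widetilde{J}_q(r)\,m(r)^{\tau} + Cc_4^q,$$
so it suffices to fix $c_4$ small enough (depending only on $c_3$, $q$, and the absolute constant $C$ controlling $|\widetilde{Q}(r)|/r^5$) that $Cc_4^q\leq c_3/2$; this immediately gives the desired lower bound $\liminf_{r\to 0}\widetilde{J}_q(r)\,m(r)^{\tau}\geq c_3/2$.

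There is essentially no serious obstacle here: the argument is the standard ``threshold truncation'' trick, and the exponents are matched precisely so that the truncation level $c_4 r^{-2}m(r)^{-\tau/q}$ kills off exactly a constant fraction of $m(r)^{-\tau}$. The only care needed is to choose $c_4$ after Lemma \ref{J-b control} has fixed $c_3$, and to track the absolute constant from the measure of $\widetilde{Q}(r)$ so that it can be absorbed by $c_4^q$.
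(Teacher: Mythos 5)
Your proposal is correct and is essentially the same threshold-truncation argument as the paper's own proof: decompose $J_q(r)$ into $\widetilde J_q(r)$ plus the small-gradient remainder, bound the remainder pointwise by $(c_4 r^{-2}m(r)^{-\tau/q})^q$ times $|\widetilde Q(r)|\le Cr^5$ to get $Cc_4^q m(r)^{-\tau}$, then pass to the liminf and invoke Lemma~\ref{J-b control} before choosing $c_4$. In fact your bookkeeping is cleaner than the paper's (you correctly retain $c_4^q$ and the exponent $m(r)^\tau$ where the paper's displayed computation has minor slips), but the substance and the choice ``$c_4$ small enough to absorb $c_3/2$'' are identical.
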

\begin{proof}
After a straightforward \wred{computation}, we get
\begin{align*}
J_q(r ) -\widetilde{J}_q(r )&=r^{2q-5}\iint _{\tilde{Q}( r)\cap \{(x,t)| |\nabla u(x,t)| \leq c_4 r^{-2} m(r)^{\frac{-\tau}{q}}\}}  |\nabla u|^q  dxds\\
&\leq c r^{\wred{2q-5}} c_4 r^{\wred{-2q}} m(r)^{-\tau} r^5=c c_3  m(r)^{-\tau},
\end{align*}
which yields that
\begin{equation}\label{limit}
  \limsup_{r\rightarrow 0}  m(r)^{\frac{27-5q}{5}}[J_q(r ) -\widetilde{J}_q(r )]\leq c c_4.
\end{equation}
Combining  \eqref{limit} and  Lemma \ref{J-b control} ensures that
\begin{align*}
 \liminf_{r\rightarrow 0} \widetilde{J}_q(r)  m(r)^{\frac{27-5q}{5}}&\wred{\geq}
 \liminf_{r\rightarrow 0} J_q(r)  m(r)^{\frac{27-5q}{5}}+\liminf_{r\rightarrow 0} [\widetilde{J}_q(r)- J_q(r)]  m(r)^{\frac{27-5q}{5}}\\
&\geq c_3-cc_4\\
&\geq c_3/2,
\end{align*}
where $c_4=c_3/2c$. This concludes the proof of this lemma.
\end{proof}
Now we are in a position to show Theorem \ref{main}.
\begin{proof}[Proof of Theorem \ref{main}]
Let $G_k$ denote the set of $(x,t)\in F(m) \cap \mathcal{S}$ such that
\begin{equation}\label{order}
   c_2/4\leq m(r)^{\tau} \widetilde{J}_q(r)\quad  \text{and} \quad E_{\ast}(r) \leq 2 m(r),
\end{equation}
for $ 0<r<\f{1}{k}$. From Corollary \ref{J-b control-1coro},
we know that $F(m) \cap \mathcal{S}=\bigcup_{k=1}^\infty G_k=\lim\limits_{k\rightarrow \infty} G_k$.
Let $r_0={1/{k}}$, then it follows from \eqref{order}   that
\begin{equation}\label{order1}
  \quad \wred{E_{\ast}(r_1)} \leq c\ m(r_1)m(r_2)^{\tau} \wred{\widetilde{J}_q(r_2)},
\end{equation}
for any $0<r_1, r_2< r_0$.

We denote $d^{k}(x,t)=\inf \{|x-y|+|t-s|^{\f{1}{2}}: (y,s)\in G_k \}$ and
define the neighbourhood of $G_{k}$ by  $L^{k}(r)= \{(x,t)|~ d(x,t)<r \}, ~ \widetilde{L}^{k}(r)=L^{k}(r)\cap \widetilde{K}(r)$, where $\widetilde{K}(r)=\{(x,t): |\nabla u(x,t)| > c_3 r^{-2} m(r)^{\frac{-\tau}{q}}\}$. By the classical Vitali covering lemma, $G_{k}\subset \mathcal{S}$ and \eqref{ckn}, we know that
there is a sequence of parabolic cylinders $\{Q (x_{i},t_{i};\, r)\}$ such that
\begin{align}
&G_k\subset \bigcup_i Q(x_{i},t_{i};\, {5r}),\nonumber\\
&(x_{i},t_{i}) \in G_k,\nonumber\\
&Q(x_{m},t_{m};\, {r})\cap Q(x_{n},t_{n};\, {r})=\varnothing, \ m\neq n,\nonumber\\
&r\leq \varepsilon^{-1} \iint _{Q(x_i,t_i;\,{r})}|\nabla u|^2 dxds. \label{refree212}
\end{align}
Moreover, we would like to point out that the  radius $r$  in $\{Q (x_{i},t_{i};\, r)\}$ above is  independent on the points $(x_{i},t_{i})$, which can be   examined by Vitali covering lemma. For this fact, see also \cite[Proof of Theorem B, p.807]{[CKN]} and
\cite[Proof of theorem 2.1 assuming theorem 2.2, p.2892]{[Kukavica]}.

Thanks to  the definition of $L^{k}(r)$, we infer that
$$ L^{k}(r)\subset \bigcup_i Q(x_{i},t_{i};\, 6r),$$
which yields that
\begin{equation*}
  \iint _{L^{k}(r)} |\nabla u|^2 dxds \leq \sum_i \iint _{Q(x_{i},t_{i};\, {6r})} |\nabla u|^2 dxds.
\end{equation*}
By \eqref{order1}, for $0<r<r_{0}$, we arrive at that
 \begin{align}\label{E-control-by-E-1}
\begin{aligned}
  \iint _{L^{k}(r)} |\nabla u|^2 dxds &\leq C r^{2q-4}m(6r)m(r)^{\tau}\sum_i \iint _{Q(x_{i},t_{i};\,r)\cap \tilde{K}(r )} |\nabla u|^q dxds\\
&=C r^{2q-4}m(r)^{\tau+1} \iint _{\widetilde{L}(r)} |\nabla u|^q dxds.
\end{aligned}
\end{align}
 Define  $d^{k}_n(x,t)=\max \{d^{k}(x,t), {\f{1}{n}}\}$ with $n>k$.
Multiplying $\eqref{E-control-by-E-1}$ by
$r^{-1}$ and integrating \wred{the obtained inequality} over ($n^{-1}$, $r_0$),
we get
\begin{align}\label{interchange}
 \begin{aligned}
 \iint _{L^{k}(r_0)} [\Gamma(d_n)-\Gamma(r_0)] |\nabla u|^2 dxds&=\int _{n^{-1}}^{r_0} \iint _{L^{k}(r)}  r^{-1} |\nabla u|^2 dxdsdr\\
 &\leq C \int _{n^{-1}}^{r_0} \wred{r^{2q-5}}\Gamma(r)^{(\tau+1)\sigma} \iint _{\widetilde{L}^{k}(r)} |\nabla u|^q dxdsdr,
\end{aligned}
\end{align}
where we used the definition of $\Gamma(r).$

Thanks to Tonelli's theorem, we interchange the order of integration for the right-hand side of the inequality $\eqref{interchange}$ to arrive at
\begin{align*}
   &\iint _{L^{k}(r_0)} [\Gamma(d_n)-\Gamma(r_0)] |\nabla u|^2 dxds\\
   \leq &C  \iint _{L^{k}(r_0)} |\nabla u|^q
\int _{n^{-1}}^{r_0}\chi_{\widetilde{K}(r)\cap L^{k}(r)}(x,t) r^{2q-5}\Gamma(r)^{(\tau+1)\sigma} drdxds\\
\leq &C \iint _{L^{k}(r_0)} |\nabla u|^q
\int _{n^{-1}}^{r_0}\min\{\chi_{\widetilde{K}(r)}(x,t) ,\chi_{L^{k}(r)}(x,t) \} r^{2q-5}\Gamma(r)^{(\tau+1)\sigma} drdxds.
\end{align*}
Due to the properties of $\Gamma(r)$ and the definition of
\wred{$L^{k}(r)$}, for $q<2$, we find
\begin{equation}\label{first-integration}
\int _{n^{-1}}^{r_0}\chi_{\wred{L^{k}(r)}}(x,t)  r^{2q-5}\Gamma(r)^{(\tau+1)\sigma} dr\leq C
d_n^{2q-4}\Gamma(d_n)^{(\tau+1)\sigma}.
\end{equation}
For $(x,t)\in \widetilde{K}(r)$, it is clear that
\begin{equation}\label{KRZ}
r^{-1}\leq c_{3}^{-\frac{1}{2}}|\nabla u(x,t)|^{\frac{1}{2}}\wred{\Gamma(r)^{\frac{\tau\sigma}{2q}}}.
\end{equation}
In the light of  $\lim\limits_{r\rightarrow 0} r\Gamma(r)=0$, it turns out that
\begin{equation*}
  \Gamma(r)\leq \frac{C}{r}.
\end{equation*}
Consequently, we can obtain that
\begin{equation*}
  r^{-1}\leq C|\nabla u(x,t) |^{\frac{1}{2}}r^{\frac{-\tau\sigma}{2q}},
\end{equation*}
which in turn implies
\begin{equation*}
  r^{-1}\leq C|\nabla u(x,t) |^{\frac{1}{2(1-\delta)}},
\end{equation*}
where $\delta=\frac{\tau\sigma}{2q}\in(0,1)$.
With the help of  the properties of $\Gamma(r)$, we infer that
\begin{equation}\label{GRNZ}
  \Gamma(r)\leq \Gamma(C|\nabla u(x,t) |^{\frac{-1}{2(1-\delta)}})\leq C(\delta)\Gamma(|\nabla u(x,t) |^{-\frac{1}{2}}).
\end{equation}
Combining this and    \eqref{KRZ}, we get the following result
\begin{equation}\label{RNZ}
  r^{-1}\leq C|\nabla u(x,t) |^{\frac{1}{2}}\Gamma(|\nabla u(x,t) |^{-\frac{1}{2}})^{\delta}.
\end{equation}
From \eqref{GRNZ} and \eqref{RNZ}, for $3p/5\leq q<2$, we see that
\begin{align} \label{second-integration}
 \begin{aligned}
 &\int _{n^{-1}}^{r_0}\chi_{\widetilde{K}}(x,t) r^{2q-5}\Gamma(r)^{(\tau+1)\sigma} dr \\
 \leq &C
 \int _{n^{-1}}^{r_0}\chi_{\widetilde{K}}(x,t)  r^{2q-5} dr \Gamma( |\nabla u(x,t) |^{-{\f{1}{2} }})^{(\tau+1)\sigma}\\
\leq &C |\nabla u(x,t) |^{2-q}\Gamma(|\nabla u(x,t) |^{-{\f{1}{2} }})^{(4-2q)\delta+(\tau+1)\sigma}\\
= &C |\nabla u(x,t) |^{2-q}\Gamma(|\nabla u(x,t) |^{-{\f{1}{2}}})^{\f{(2-q)\tau\sigma}{q}+(\tau+1)\sigma}.
\end{aligned}
\end{align}
It follows from \eqref{first-integration} and \eqref{second-integration} that
\begin{align}\label{inequality}
\begin{aligned}
  &\iint _{L^{k}(r_0)} [\Gamma(d_n)-\Gamma(r_0)] |\nabla u|^2 dxds\\
\leq & C \iint _{L^{k}(r_0)} |\nabla u|^q
\min\{{d_n^{2q-4}}\Gamma(d_n)^{\delta_{1}},\ |\nabla u(x,t)|^{2-q}
\Gamma(|\nabla u(x,t)|^{-{\f{1}{2}}})^{\delta_{2}}\}dxds,
\end{aligned}
\end{align}
where
\be \label{extend}
 \delta_{1}=(\tau+1)\sigma \quad ~~\text{and}~~~ \delta_{2}=\f{(2-q)\tau\sigma}{q}+(\tau+1)\sigma.
\ee
By $\sigma<27/113$, we can choose   $q$  sufficiently  close to $2$  and $p$  sufficiently  close to $10/3$  to  guarantee that
 \be\label{wu2} \delta_{2}=\f{(2-q)\tau\sigma}{q}+(\tau+1)\sigma<1.\ee
 In case   $ |\nabla u|\geq d_n^{-2}$,  we see that
$$
  |\nabla u|^q\min\{d_n^{2q-4}\Gamma(d_n)^{\delta_{1}}, |\nabla u(x,t)|^{2-q}\Gamma(|\nabla u(x,t)|^{-{\f{1}{2}}})^{\delta_{2}}\}
  \leq  |\nabla u|^2 \Gamma(d_n)^{\delta_{1}}\leq  |\nabla u|^2 \Gamma(d_n)^{\delta_{2}}.
$$
Otherwise, if $|\nabla u|<d_n^{-2}$, we get
$$
  |\nabla u|^q\min\{d_n^{2q-4}\Gamma(d_n)^{\delta_{1}}, |\nabla u(x,t)|^{2-q}\Gamma(|\nabla u(x,t)|^{-{\f{1}{2}}})^{\delta_{2}}\}
  \leq  |\nabla u|^2 \Gamma(d_n)^{\delta_{2}}.
$$
So, no matter in which case, we always  choose
  $r_0$ sufficiently small to get
\begin{equation*}
 C \ |\nabla u|^2 \Gamma(d_n)^{\delta_{2}}\leq
  {\f{1}{4}} |\nabla u|^2 \Gamma(d_n).
\end{equation*}
This together with \eqref{inequality} implies that
\begin{equation*}
  \iint _{L^{k}(r_0)} \Gamma(d_n)|\nabla u|^2 dxds \leq c(\sigma, q, r_0)<\infty.
\end{equation*}
We deduce from  monotone convergence theorem  in the last inequality that
\begin{equation}
  \iint _{L^{k}(r_0)} \Gamma(d)|\nabla u|^2 dxds <\infty.\label{ref}
\end{equation}
 Since
$\widetilde{K}(r)=\{(x,t): |\nabla u(x,t)| > c_4 r^{-2} m(r)^{\frac{-\tau}{q}}\},$
by means of Chebyshev's inequality, we infer that
$$\iint _{Q(x_{i},t_{i};\,r)\cap \tilde{K}(r )} dxds\leq
\f{1}{(c_4 r^{-2} m(r)^{\frac{-\tau}{q}})^{2}}  \iint _{Q(x_{i},t_{i};\,r)\cap \tilde{K}(r )}|\nabla u| ^{2} dxds.$$
For $q<2$, by the H\"older inequality and the last inequality, we have
\be\ba\label{wu1}
\iint _{Q(x_{i},t_{i};\,r)\cap \tilde{K}(r )} |\nabla u|^q dxds &\leq\B(\iint _{Q(x_{i},t_{i};\,r)\cap \tilde{K}(r )} |\nabla u|^2dxds\B)^{q/2}  \B(\iint _{Q(x_{i},t_{i};\,r)\cap \tilde{K}(r )} dxds\B)^{(2-q)/2}\\
&\leq Cr^{4-2q} m(r)^{\frac{ \tau(2-q)}{q}}\B(\iint _{Q(x_{i},t_{i};\,r)\cap \tilde{K}(r )} |\nabla u|^2dxds\B).
\ea\ee
By virtue of the definition of $\Psi_\delta(E, h)$ and the above inequality,  we derive from \eqref{order}, \eqref{wu1} and  \eqref{wu2} that, for every $k\geq2$, $0 <r \leq r_{0} \leq 1/2,$
\be\ba
\Psi_{5r}(G_k, t\Gamma^{\sigma}(t)) \leq& \sum_{i}(5r)\Gamma^{\sigma}(5r)\\
\leq& \sum_{i} r  m(r)^{1+\tau} \widetilde{J}_q(r)\\
\leq& C
\Gamma(r)^{(1+\tau)\sigma+\frac{ \sigma\tau(2-q)}{q}}\sum_{i}   \B(\iint _{Q(x_{i},t_{i};\,r)\cap \tilde{K}(r )} |\nabla u|^2dxds\B)
\\
\leq& C \Gamma(r)^{\delta_{2}-1}\iint _{L^{k}(r_0)} \Gamma(d)|\nabla u|^2 dxds,
\ea\label{wu3}
\end{equation}
which together with \eqref{ref}  implies that
\begin{equation}\label{MSF}
\Lambda(\mathcal{S}\cap F(m), r\Gamma(r)^{\sigma })=0.
\end{equation}
To complete the proof, we have to show that $\Lambda(\mathcal{S}\backslash F(m), r\Gamma(r)^{\sigma })=0$. Indeed,   for $(x,t)\in \mathcal{S}\backslash F(m)$, we deduce    from \eqref{ckn} and the definition of $ F(m)$ that
$$\limsup_{r\rightarrow 0} r^{-1} \iint _{Q (x,t;\,r)} |\nabla u|^2 dxds \geq \varepsilon$$
and
\begin{equation*}
 \limsup_{r\rightarrow 0} \f{1} {r\Gamma(r)^\sigma} \iint _{Q (x,t;\,r)} |\nabla u|^2 dxds \geq 1.
\end{equation*}
Let $\delta>0$, for each $(x,t)\in \mathcal{S}\backslash F(m)$, we can choose $Q(x,t;\,r)$ with $r<\delta$ such that
\begin{equation*}
  \iint _{Q (x,t;\,r)} |\nabla u|^2 dxds \geq \varepsilon r/2 \quad \text{and} \quad \iint _{Q (x,t;\,r)} |\nabla u|^2 dxds \geq r\Gamma(r)^\sigma/2.
\end{equation*}
From classical  Vitali covering lemma, we know that there exists a disjoint subfamily $\{Q(x_i,t_{i};{r_i})\}$
such that
$$\mathcal{S}\backslash F(m)\subset \bigcup_i Q(x_i,t_i;\,{5 r_i})$$
and
$$\B|\bigcup_i Q(x_i,t_i;\,{ r_i})\B|\leq C \delta^4 \sum_i r_i \leq C \delta^4   \iint _{\bigcup\limits_i Q(x_i,t_i;\,{ r_i})} |\nabla u|^2 dxds \leq C \delta^4,$$
where $C$ is independent  of  $\delta$. In addition, we know that
$$\sum r_i \Gamma(r_i)^\sigma \leq 2\sum_i \iint _{Q (x_i,t_i;\,{r_i} )} |\nabla u|^2 dxds=2\iint _{\bigcup\limits_{i} Q(x_i,t_i;\,{ r_i})} |\nabla u|^2 dxds.$$
Note that $\delta$ is arbitrary. Therefore, it follows from absolutely continuity of the integral of  $|\nabla u|^2$ that
\begin{equation*}
\Lambda(\mathcal{S}\backslash F(m), r\Gamma(r)^{\sigma })=0,
\end{equation*}
Combined this and \eqref{MSF} implies $\Lambda(\mathcal{S}, r\Gamma(r)^{\sigma })=0$. This   ends the proof of Theorem \ref{main}.
\end{proof}

\section*{Acknowledgement}
 The authors would like to express their deepest gratitude to    two anonymous kind referees  and   the editors
    for  careful reading of our manuscript, the invaluable comments and suggestions which helped to improve the paper greatly. In particular, the proof of \eqref{wu3} was generously suggested by the
referee.
Wang was partially supported by  the National Natural
Science Foundation of China under grant No. 11601492.
Wu was partially supported by  the National Natural
Science Foundation of China under grant No. 11771423 and No.
11671378.

\end{document}